\numberwithin{equation}{section}
\newcommand{\bigR}{{\mathbb R}}
\newcommand{\itemEq}[1]{%
         \begingroup%
         \setlength{\abovedisplayskip}{0pt}%
         \setlength{\belowdisplayskip}{0pt}%
         \parbox[c]{\linewidth}{\begin{flalign}#1&&\end{flalign}}%
         \endgroup}
\newtheorem{theorem}{Theorem}[section]  
\newtheorem{lemma}[theorem]{Lemma}  
\newtheorem{proposition}[theorem]{Proposition}  
\newtheorem{corollary}[theorem]{Corollary}  
\newtheorem{remark}[theorem]{Remark}
\title{Singularity formation in axially symmetric mean curvature flow with Neumann boundary}  
\author{John Head, Sevvandi Kandanaarachchi}  
\date{}
\begin{document}

\maketitle

 \onehalfspacing  
  

\abstract
We study mean curvature flow of smooth, axially symmetric surfaces in $\mathbb{R}^3$ with Neumann boundary data. We show that all singularities at the first singular time must be of type I.

\section{Introduction}

Consider a smooth, $n$-dimensional hypersurface immersion $\mathbf{x}_0: M^n \rightarrow \mathbb{R}^{n+1}$. The solution of mean curvature flow generated by $\mathbf{x}_0(M^n)$ is the one-parameter family $\mathbf{x}: M^n \times [0,T) \rightarrow \mathbb{R}^{n+1}$ of smooth immersions satisfying
\begin{equation}\label{eq:int_2}
\frac{\partial}{\partial t} \mathbf{x}(l, t)=-H(l,t)\nu(l,t), \hspace{5 mm} l\in M^n, t\geq0 \, ,
\end{equation}
\noindent
with $\mathbf{x}(\cdot,0) = \mathbf{x}_0$. Here $\nu(l,t)$ represents a choice of unit normal -- the outward-pointing unit normal in the closed setting -- and $H(l,t)$ is the mean curvature. According to our choice of signs, the right-hand side is the mean curvature vector and the mean curvature of the round sphere is positive. We henceforth write $M_t = \mathbf{x}(\cdot, t)(M^n)$.\\

\noindent
In \cite{GH1} Huisken initiated a formal investigation of the classical evolution (\ref{eq:int_2}), establishing that any compact and uniformly convex hypersurface of dimension at least two must contract smoothly to a point in finite time and in an asymptotically round fashion. \\

\noindent
Altschuler, Angenent and Giga \cite{AAG} studied generalized viscosity solutions of mean curvature flow in the axially symmetric setting. They showed in particular that there is a finite set of singular times outside of which the evolving hypersurfaces are smooth. In addition, they showed that at each of the singular times, a finite number of necks pinch off at isolated points along the axis of rotation (or else the entire connected component of the hypersurface shrinks to a point on the axis of rotation). The flow therefore produces a smooth family of smooth hypersurfaces away from the axis of rotation. \\

\noindent
In the setting of closed two-convex hypersurfaces of dimension at least three, Huisken and Sinestrari \cite{HS3} introduced a surgery-based algorithm for modifying high curvature regions in a topologically controlled way, thereby constructing a non-canonical continuation of the classical evolution which is compatible with the well-established theory of weak solutions in a precise quantitative sense. \\

\noindent
In this paper we consider a smooth, compact, 2-dimensional hypersurface $M_0$ in $\mathbb{R}^3$ with boundary $\partial M_0 \neq \emptyset$. We assume that $M_0$ is smoothly embedded in the domain
$$ G = \{ \mathbf{x} \in \bigR^{3} : a < x_1 < b \} \, , \hspace{3mm}  0<a<b \, ,
$$
and that the free boundary satisfies the constraint $\partial M_0 \subset \partial G$. Moreover, we assume that $M_0$ is axially symmetric and that the surface meets the planes $x_1 = a$ and $x_1 = b$ orthogonally. \\

\noindent
In this setting, the authors \cite{JHSK1} proved that if the mean curvature is uniformly bounded on any finite time interval, then no singularity can develop during that time. \\

\noindent
Of course, the well-known comparison principle guarantees that singularities must develop in finite time, motivating an analysis of the types of singularities that can occur. In \cite{GH2} Huisken showed that if $M_0$ has positive mean curvature, then all singularities must be of type I. Moreover, they behave asymptotically like shrinking cylinders after appropriate parabolic rescaling. \\

\noindent 
In this paper we obtain a complete classification of singularities without any restriction on the mean curvature of the initial data. We emphasize that we henceforth restrict our attention to 2-dimensional surfaces in $\mathbb{R}^3$.

\begin{theorem}{\emph{(Singularity Classification)}}\label{Thm:MainThm}
Consider a smooth, axially symmetric solution $M_t$ of mean curvature flow \eqref{eq:int_2} in $\mathbb{R}^3$ with Neumann boundary on the maximal time interval $[0,T)$, where $T>0$ denotes the first singular time. Then all singularities that develop as $t\to T$ must be of type I.
\end{theorem}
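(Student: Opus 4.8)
Since $M_t$ is axially symmetric, I would first replace it by its generating (profile) curve $\gamma_t$ in the half-plane $\{(x_1,r):r>0\}$. Parametrising $\gamma_t$ by arclength — or, where it is graphical, by $r=u(x_1,t)$ — the flow \eqref{eq:int_2} reduces to a quasilinear, uniformly parabolic equation wherever $r>0$, e.g.\ $u_t=\dfrac{u_{xx}}{1+u_x^2}-\dfrac1u$, and the two principal curvatures of $M_t$ are the curvature $k_1$ of $\gamma_t$ and the rotational curvature $k_2=\dfrac{1}{u\sqrt{1+u_x^2}}$, which is \emph{strictly positive} as long as $r>0$; thus $|A|^2=k_1^2+k_2^2$. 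Orthogonality of $M_t$ with the planes $x_1=a,b$ becomes the Neumann condition $u_x(a,\cdot)=u_x(b,\cdot)=0$, i.e.\ $\gamma_t$ meets the vertical boundary lines perpendicularly. Reflecting $\gamma_t$ across one of these lines produces a $C^1$ curve, hence — by uniform parabolicity away from the axis and standard parabolic regularity — a smooth solution across the line. Consequently a singularity forming at a boundary point of $M_t$ corresponds locally to an interior singularity of a (smooth, axially symmetric) doubled flow, and it suffices to classify interior singularities.

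Next I would locate the singular set. By \cite{JHSK1}, if $\max_{M_t}|H|$ stayed bounded up to $T$ there would be no singularity; conversely, on any region where $r$ remains bounded below the equation is uniformly parabolic and interior/boundary Schauder estimates bound all derivatives of $\gamma_t$ up to $T$. Hence $\inf_{x} r(x,t)\to 0$ as $t\to T$: the profile curve must touch the rotation axis at time $T$. Angenent's Sturmian oscillation theorem (valid with the Neumann condition) forces the number of critical points of $r(\cdot,t)$ to be finite and eventually constant, so the axis is approached only at finitely many "neck" points $x_\ast$ (local minima of $r$ along which $r\to0$); if the axis is approached along a full subinterval the argument below applies verbatim and is only simpler. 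Fix one such $x_\ast\in(a,b)$.

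The core is a pair of a priori estimates near $x_\ast$, on $[x_\ast-\rho,x_\ast+\rho]\times[T-\delta,T)$: a gradient bound $|u_x|\le C$ (the generating curve does not become vertical near the neck) and a profile-curvature bound of the form $|k_1|\le C\,k_2$ (equivalently, with the gradient bound, $|u_{xx}|\le C$). These are obtained by the maximum principle and barrier comparisons applied to the equations satisfied by $u_x$ and by $k_1$, exploiting two ingredients: the strict positivity and precise form of $k_2$, which supplies a favourable lower-order structure; and the Sturmian control on critical points of $r(\cdot,t)$ and inflection points of $\gamma_t$, which pins down, for $t$ near $T$, a fixed simple geometry near $x_\ast$ (a single neck, $u_x$ of one sign on each side). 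This is where surfaces in $\mathbb{R}^3$ are essential: in $\mathbb{R}^{n+1}$ with $n\ge3$ the second-derivative bound fails, as the degenerate neckpinches of Angenent–Velázquez are genuinely type II. I expect this profile-curvature estimate — keeping $k_1$ from outpacing $k_2$ at a pinching neck, and making the maximum-principle/Sturmian argument run uniformly up to the Neumann boundary rather than only in the interior as in \cite{AAG} — to be the main obstacle.

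With these estimates, the conclusion is quick. Near $x_\ast$ one has $|A|^2=k_1^2+k_2^2\le(1+C^2)k_2^2\le\dfrac{1+C^2}{u^2}$, so it remains to prove the matching lower bound $u(x,t)\ge c\sqrt{T-t}$. The curvature estimate gives $u_t\ge -\tfrac1u-C'$; comparing $u(x_\ast,\cdot)$ with the shrinking-cylinder subsolutions of this ODE (and with genuine shrinking cylinders/spheres as outer barriers via the avoidance principle, the Neumann boundary contributing no extra density in Huisken's monotonicity formula since $M_t$ is orthogonal to the planes) yields $u(x_\ast,t)^2\ge c^2(T-t)$, and a standard interpolation propagates this to a neighbourhood of $x_\ast$. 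Away from the finitely many singular points $|A|$ stays bounded up to $T$, so combining the two regimes gives $\sup_{M_t}|A|^2\le\dfrac{C}{T-t}$ for $t$ near $T$, which is the type I estimate.
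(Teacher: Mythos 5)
Your outline leaves the actual content of the theorem unproved at two places. First, the entire argument hinges on the profile-curvature estimate $|k_1|\le C\,k_2$ (in the paper's notation, a two-sided bound $|k|/p\le C$) holding uniformly up to $T$ near the neck \emph{without} any sign assumption on $H$, and you yourself flag this as "the main obstacle" and offer only the expectation that a maximum-principle/Sturmian argument will deliver it. That is precisely the hard step: Huisken's estimate $k/p\le c$ is one-sided, and the two-sided version is available only where $H\ge0$ or $|H|$ is bounded (Proposition \ref{Prop_KonP2}). The paper does not prove your estimate near the neck in full generality; instead it takes a different route entirely: it first shows $|A|^2$ stays bounded in regions of negative mean curvature (Section \ref{Section_Negative_MC}), then splits the possible singular behaviour into the cases $|H|$ bounded, $H\to\infty$ with $|A|^2/H^2\to\infty$, and $H\to\infty$ with $|A|^2/H^2$ bounded; the first two are excluded by a blow-up argument in which the limit must be a catenoid, contradicting the gradient estimate $yv\le c$ (Lemma \ref{Lemma_UV}), and only in the last case does it run Huisken's $q/H$ evolution-equation argument, with a non-cylindrical maximum principle to absorb the Neumann boundary and the negative-$H$ regions. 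Without either that case analysis or a genuine proof of your neck estimate, the proposal is a plan rather than a proof.

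Second, even granting your core estimates, the concluding step is logically reversed. From $|k_1|\le Ck_2$ and the gradient bound you get $u_t\ge-\frac{1+C}{u}$, hence $\frac{d}{dt}u^2\ge-2(1+C)$; integrating to the vanishing time $T$ gives the \emph{upper} bound $u(x_\ast,t)^2\le 2(1+C)(T-t)$, not the lower bound $u^2\ge c(T-t)$ that the type I estimate $|A|^2\le C/(T-t)$ requires. A lower bound on $u$ needs an upper bound of the form $u_t\le-c'/u$ with $c'>0$ (a definite shrinking rate at the neck), which does not follow from $|k_1|\le Ck_2$ unless $C<1$, and your suggested barriers point the wrong way as well: an exterior shrinking cylinder via avoidance again bounds $u$ from above, while an interior barrier of radius comparable to $\sqrt{T-t}$ is exactly what one cannot assume a priori --- its failure is the mechanism of type II degenerate neckpinches. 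So the final "quick" step needs a genuinely different argument (in the paper this is the role of the $|q|/H$ bound and the remainder of Huisken's Proposition 5.3), and as written it does not close.
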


\noindent
It follows as an immediate consequence of Theorem \ref{Thm:MainThm} that any axially symmetric surface with Neumann boundary cannot have $H<0$ everywhere. This property is independent of mean curvature flow. \\

\noindent
If there exists some $0<t_0<T$ such that $H(l,t)>0$ for all $l\in M^2$ and $t>t_0$, then Theorem \ref{Thm:MainThm} follows from the work of Huisken, see section 5 of \cite{GH2}. Our proof covers the cases in which points of negative mean curvature persist up to the singular time $T$. \\

\noindent
\textbf{Outline.} The results in this paper are organised as follows. In section \ref{Section_Notation} we establish notation and introduce the requisite definitions.  Section~\ref{sec:AprioriEstimates} contains preliminary height, gradient and curvature estimates. In section~\ref{Section_Negative_MC} we use these \textit{a priori} estimates to prove directly that no singularities can develop in regions of negative mean curvature. \\

\noindent
In section~\ref{sec:rescaling} we recall the parabolic rescaling techniques adopted in \cite{JHSK1}.  Section~\ref{sec:bddMC} uses this rescaling procedure to rule out singularities in certain regions of the hypersurface. Finally, in section~\ref{sec:boundedAonH}, we combine these results with the work of Huisken in \cite{GH2} to establish that all singularities must be of type I, completing the proof of the main theorem. \\

\noindent
We point out that the estimates in section~\ref{sec:boundedAonH} rely on parabolic maximum principles for non-cylindrical domains. The results employed in this section go somewhat beyond standard theory and have therefore been included in an Appendix. We refer the reader to \cite{MASK2} for further details.

\section{Notation and preliminaries}\label{Section_Notation}
In this paper we follow the notation used in \cite{JHSK1}. This agrees in particular with the notation used by Huisken in \cite{GH2} and by Athanassenas in \cite{MA1}. \\

\noindent
Let $\rho_0:[a,b] \rightarrow  \mathbb{R} $ be a smooth, positive function on the bounded interval $[a,b]$ with $\rho'_0(a) = \rho'_0(b)=0 $. Consider the surface $M_0$ in $\mathbb{R}^{3} $ obtained by rotating the graph of $\rho_0$ around the $x_1$-axis. We evolve $M_0$ according to \eqref{eq:int_2} with Neumann boundary conditions at $x_1 = a$ and $x_1 = b$. Equivalently, we can consider the evolution of a periodic surface defined on the entire $x_1$ axis. This deformation process preserves axial symmetry. We denote by $T>0$ the extinction time of the smooth evolution. \\

\noindent
Let $\mathbf{i}_1, \mathbf{i}_2, \mathbf{i}_{3}$ be the standard basis vectors in $\mathbb{R}^{3}$ associated with the $x_1, x_2, x_{3}$ axes respectively. We introduce a local orthonormal frame  $\tau_1(t), \tau_2(t) $ on the evolving surfaces $M_t$ such that 
\[ \left\langle\tau_2(t), \mathbf{i}_1\right\rangle = 0,  \hspace{5mm} \text{and} \hspace{5mm} \left\langle\tau_1(t), \mathbf{i}_1\right\rangle > 0  \, .\]

\noindent
Let $ \omega= \frac{\hat{\mathbf{x}}}{|\hat{\mathbf{x}}|} \in \mathbb{R}^{3}$ be the outward-pointing unit normal to the cylinder intersecting $M_t$ at the point $\mathbf{x}(l,t)$. Here $\hat{\mathbf{x}}= \mathbf{x} - \left\langle \mathbf{x}, \mathbf{i}_1 \right\rangle \mathbf{i}_1$.  We additionally define 

$$y = \left\langle \mathbf{x}, \omega \right\rangle \hspace{3 mm} \text{and}  \hspace{3mm} v=\left\langle \omega, \nu \right\rangle ^{-1}\, .$$ 
Following convention we call $y$ the { \it height function} and $v$ the {\it gradient function}. We emphasize that $\rho: [a, b] \times [0, T) \rightarrow \bigR$, whereas $y: M^2 \times [0, T) \rightarrow \bigR $. Note also that $v$ is a geometric quantity related to the inclination angle. More precisely, $v$ corresponds to $\sqrt{1+ \rho'^2}$ in our setting. \\

\noindent
We denote by $g=\{g_{ij}\}$ the induced metric and by $A=\{h_{ij}\}$ the second fundamental form at the space-time point $(l,t)\in M^2\times [0,T)$. Following \cite{GH2}, we define the quantities
\begin{equation}\label{notEq:1.1}
 p =  \left\langle\tau_1, \mathbf{i}_1 \right\rangle y^{-1} \quad \mbox{and} \quad  q= \left\langle\nu, \mathbf{i}_1 \right\rangle y^{-1}, 
\end{equation}

\noindent
which satisfy
\begin{equation}\label{notEq:1.2}
p^2 + q^2 = y^{-2} \, .
\end{equation}

\noindent
The second fundamental form has eigenvalues
$$p = \frac{1}{\rho \sqrt{1+ \rho'^2}}$$
and
\[ k= \left\langle \overline{\nabla}_{1} \nu, \tau_1 \right\rangle = \frac{-\rho''}{(1+\rho'^2)^{3/2}}. \]

\noindent
We recall the following evolution equations, see \cite{JBT1,GH1}.

\begin{lemma}\label{Lemma_Evolution_Equations_2}{\emph{(Evolution Equations)}} We have the evolution equations: 
\begin{itemize}
\item[(i)] $\frac{\partial}{\partial t} y = \Delta y - \frac{1}{y}  \, ; $
\item[(ii)]$ \frac{\partial}{\partial t} v = \Delta v  -|A|^2v + \frac{v}{y^2} - \frac{2}{v}|\nabla v|^2\, ;  $
\item [(iii)] $ \frac{\partial}{\partial t} k = \Delta k + |A|^2k -2q^2(k-p)  \, ;  $
\item [(iv)] $ \frac{\partial}{\partial t} p = \Delta p + |A|^2p + 2q^2(k-p)  \, ;  $
\item [(v)] $ \frac{\partial}{\partial t} q = \Delta q + |A|^2q + q\left(p^2 - q^2 - 2kp \right)  \, ;  $
\item [(vi)]$ \frac{\partial}{\partial t} H = \Delta H + H  |A|^2 \, . $
\end{itemize}

\end{lemma}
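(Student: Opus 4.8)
\roughproof The plan is to deduce all six identities from the first-order relations of the flow together with the rigidity forced by the rotational symmetry. At each point $\mathbf{x}(l,t)$ the symmetry singles out the \emph{meridian plane} $\Pi=\mathrm{span}\{\mathbf{i}_1,\omega\}=\tau_2^{\perp}$, inside which $\{\mathbf{i}_1,\omega\}$ and $\{\tau_1,\nu\}$ are orthonormal bases related by a rotation; this gives $\langle\tau_1,\mathbf{i}_1\rangle=\langle\nu,\omega\rangle=v^{-1}$, $\langle\tau_1,\omega\rangle=-\langle\nu,\mathbf{i}_1\rangle$, and, together with $\langle\tau_2,\mathbf{i}_1\rangle=0$, the relation \eqref{notEq:1.2}; note in particular $v^{-1}=py$. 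The second fundamental form is diagonal in $\{\tau_1,\tau_2\}$ with meridional eigenvalue $k$ and parallel eigenvalue $p$, so that $H=k+p$, $|A|^2=k^2+p^2$, and the meridians of $M_t$ are geodesics, $\nabla_{\tau_1}\tau_1=0$. Differentiating the definitions \eqref{notEq:1.1} and inserting the Gauss--Weingarten equations (equivalently, computing in coordinates adapted to the symmetry) yields the structure identities
\[
\tau_1(y)=-qy,\qquad \operatorname{div}\tau_1=\tau_1(\log y)=-q,\qquad \tau_1(p)=-q(k-p),\qquad \tau_1(q)=kp+q^2,
\]
while $\tau_2$ annihilates the meridional functions $y,v,p,q,k,H$. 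Finally, because $\nu\in\Pi$ the flow $\partial_t\mathbf{x}=-H\nu$ keeps each point in its meridian plane, so the meridians are preserved and $\partial_t\tau_2=0$; differentiating $|\tau_1|^2=1$, $\langle\tau_1,\tau_2\rangle=0$ and $\langle\tau_1,\nu\rangle=0$ in $t$, and using $\partial_t\nu=\nabla H=(\nabla_1 H)\tau_1$ (where $\nabla_1 H:=\tau_1(H)$ and $\tau_2(H)=0$), then forces $\partial_t\tau_1=-(\nabla_1 H)\nu$. Identity~(vi) is Huisken's classical computation, obtained by tracing $\partial_t h_{ij}=\Delta h_{ij}+|A|^2 h_{ij}-2H h_i{}^{\ell}h_{\ell j}$ against $g^{ij}$ together with $\partial_t g^{ij}=2H h^{ij}$, and I would simply cite it.

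For (i), write $y^2=\langle\mathbf{x},\mathbf{i}_2\rangle^2+\langle\mathbf{x},\mathbf{i}_3\rangle^2$. Since $\partial_t\mathbf{x}=\Delta\mathbf{x}=-H\nu$, each coordinate $u_a:=\langle\mathbf{x},\mathbf{i}_a\rangle$ satisfies the heat equation $\partial_t u_a=\Delta u_a=-H\langle\nu,\mathbf{i}_a\rangle$ on $M_t$. Applying the Leibniz rules for $\partial_t$ and $\Delta$ to $u_2^2+u_3^2$ and subtracting, the terms $u_a\Delta u_a$ cancel and one is left with $2y(\partial_t y-\Delta y)=-2\bigl(|\nabla u_2|^2+|\nabla u_3|^2-|\nabla y|^2\bigr)$. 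The bracket equals $1$: the sum $\sum_{a=1}^{3}|\nabla u_a|^2$ is the trace of the orthogonal projection onto $T_pM_t$ and hence equals $2$, while $|\nabla u_1|^2=\langle\tau_1,\mathbf{i}_1\rangle^2$ and $|\nabla y|^2=\tau_1(y)^2=q^2y^2=\langle\nu,\mathbf{i}_1\rangle^2$, so the bracket is $2-\bigl(\langle\tau_1,\mathbf{i}_1\rangle^2+\langle\nu,\mathbf{i}_1\rangle^2\bigr)=2-|\mathbf{i}_1|^2=1$. Hence $\partial_t y=\Delta y-y^{-1}$; the same computation also records $\partial_t y=-H/v=-Hpy$.

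For (iv), use $py=\langle\tau_1,\mathbf{i}_1\rangle$, so $\partial_t(py)=\langle\partial_t\tau_1,\mathbf{i}_1\rangle=-(\nabla_1 H)\langle\nu,\mathbf{i}_1\rangle=-qy\,\nabla_1 H$; with $\partial_t y=-Hpy$ this gives $\partial_t p=-q\,\nabla_1 H+Hp^2$. For the spatial part, $\nabla p=\tau_1(p)\,\tau_1=-q(k-p)\tau_1$, so $\Delta p=\tau_1\bigl(\tau_1(p)\bigr)+\tau_1(p)\operatorname{div}\tau_1$, and expanding with the structure identities and $\tau_1(k)=\tau_1(H)-\tau_1(p)=\nabla_1 H+q(k-p)$ collapses this to $\Delta p=-2q^2(k-p)-kp(k-p)-q\,\nabla_1 H$; subtracting and using $Hp^2+kp(k-p)=p(k^2+p^2)=p\,|A|^2$ gives (iv). Then (iii) is immediate from (vi) and (iv): $\partial_t k=\partial_t H-\partial_t p=\Delta k+|A|^2 k-2q^2(k-p)$. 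Identity~(v) follows identically, with $qy=\langle\nu,\mathbf{i}_1\rangle$, $\partial_t(qy)=\langle\nabla H,\mathbf{i}_1\rangle=(\nabla_1 H)\,py$ and $\nabla q=(kp+q^2)\tau_1$. Finally (ii) follows from (iv): setting $w:=py=v^{-1}$, the same procedure yields $\partial_t w-\Delta w=w(k^2-q^2)$; since $v=1/w$ one has $\partial_t v=-w^{-2}\partial_t w$ and $\Delta v=-w^{-2}\Delta w+\tfrac{2}{v}|\nabla v|^2$, so $\partial_t v=\Delta v-v(k^2-q^2)-\tfrac{2}{v}|\nabla v|^2=\Delta v-|A|^2 v+\tfrac{v}{y^2}-\tfrac{2}{v}|\nabla v|^2$, using $|A|^2=k^2+p^2$ and $y^{-2}=p^2+q^2$.

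The computations themselves are elementary; the bookkeeping is the real content. The first obstacle is to fix signs consistently --- the orientation of $\{\tau_1,\tau_2,\nu\}$ and the sign convention for $h_{ij}$ --- so that $H>0$ on the round sphere, $p=\frac{1}{\rho\sqrt{1+\rho'^2}}$, and the structure identities hold as displayed; with that in place, the substantive point is to verify that the first-order terms (those in $\nabla_1 H$, $|\nabla v|^2$, and the gradient terms produced by the divergence formula) cancel or reassemble \emph{exactly} into the stated zeroth-order reaction terms. The heart of this cancellation is the single Codazzi identity $\tau_1(p)=-q(k-p)$ for surfaces of revolution together with its consequences, and the only genuinely delicate step is the legitimacy of differentiating $\langle\tau_1,\mathbf{i}_1\rangle$ --- an inner product with a \emph{frame field} rather than a fixed ambient vector --- in time and in space via $\partial_t\tau_1=-(\nabla_1 H)\nu$ and $\nabla_{\tau_1}\tau_1=0$, both of which rely on the preservation of meridians by the flow. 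An alternative is to derive (i)--(v) directly in coordinates adapted to the symmetry, where $y\leftrightarrow\rho$ and $v\leftrightarrow\sqrt{1+\rho_{x_1}^2}$, by substituting the profile equation; this should be phrased using arc length along the profile rather than a graph over the axis, since the flow need not stay graphical, and it is essentially the route of \cite{MA1,GH2,JBT1}.
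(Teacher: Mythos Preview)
Your derivations are correct; the structure identities, the frame evolution $\partial_t\tau_1=-(\nabla_1 H)\nu$, and the cancellations all check out, and the passage from $w=py$ to $v$ via $k^2-q^2=|A|^2-y^{-2}$ is clean. Note, however, that the paper does not actually \emph{prove} this lemma: it simply states the six equations and cites \cite{JBT1,GH1}. So you have supplied a genuine proof where the paper only gives references. Your route---working intrinsically with the moving frame $\{\tau_1,\tau_2,\nu\}$ and the scalar structure relations $\tau_1(p)=-q(k-p)$, $\tau_1(q)=kp+q^2$, $\operatorname{div}\tau_1=-q$---is essentially the one in \cite{GH2} (and your own ``alternative'' paragraph), just packaged frame-theoretically rather than in the arc-length coordinate; the heat-equation argument for~(i) is a pleasant coordinate-free variant. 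What you gain is that the computations are manifestly geometric and independent of graphicality; what the paper gains by citing is brevity, since these identities are standard in the axially symmetric MCF literature.
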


\noindent 
Finally, we establish notation for the smooth space-time hypersurface
$$\Omega :=\bigcup\limits_{0\leq t<T}M_t\times \{t\} \subset\mathbb{R}^3\times\mathbb{R}^+ \, .$$
Let $c>0 $. For each $0\leq t <T$ we define
\[
\Omega_t^-:=\{ \mathbf{x}(l,t)\in M_t : \, H(l,t)< -c  \}\subset M_t.
\]
We let $\Omega^-:=\cup_{t<T}\Omega_t^-\times\{t\} \subset \Omega$ and we denote by $\Gamma_{ \Omega^-}$ the non-cylindrical parabolic boundary of $\Omega^-$. \\

\section{A priori estimates}\label{sec:AprioriEstimates}

\noindent
We establish \textit{a priori} height, gradient and curvature estimates. In the first step, we show that the height function $y$ has a lower bound in $\Omega^-$. 

\begin{lemma}\label{Lemma_yLowerBound} {\emph{(Height Bound)}} There exists a constant $c  >0 $ depending on $M_0$ and $\Gamma_{\Omega^-}$ such that
\[
\inf_{ \Omega^-} y = \inf_{\Gamma_{\Omega^-}} y \geq c . 
\]
\end{lemma}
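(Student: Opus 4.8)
The plan is to establish the height bound by a maximum-principle argument on the non-cylindrical domain $\Omega^-$, using the evolution equation for $y$ from Lemma~\ref{Lemma_Evolution_Equations_2}(i). The key observation is that in $\Omega^-$ we have $H < -c < 0$, and since $H = k + p$ with $p = 1/(\rho\sqrt{1+\rho'^2}) > 0$ always positive, the condition $H < -c$ forces $k < -c - p < 0$; in particular $k$ is strictly negative wherever $y$ might be small. I would first record the elementary but crucial algebraic consequence: on $\Omega^-$, since $p > 0$ and $H = k+p < -c$, the principal curvature $k$ satisfies $k < -c$, so $\rho'' > 0$ there, i.e.\ the profile curve is strictly convex on the corresponding parameter set.

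The heart of the argument is to show that $y$ cannot attain a small interior minimum within $\Omega^-$. Suppose $y$ attains its infimum over $\overline{\Omega^-}$ at an interior space-time point $(l_0, t_0)$ (interior meaning not on $\Gamma_{\Omega^-}$). At such a point the spatial Laplacian satisfies $\Delta y \geq 0$ and the time derivative satisfies $\frac{\partial}{\partial t} y \leq 0$ (using that $t_0$ is not an initial or lateral boundary point in the parabolic sense). But Lemma~\ref{Lemma_Evolution_Equations_2}(i) gives $\frac{\partial}{\partial t} y - \Delta y = -\frac{1}{y} < 0$, which is consistent, so the naive maximum principle does not immediately yield a contradiction — this is the main obstacle. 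To overcome it I would instead argue that small $y$ is incompatible with staying inside $\Omega^-$: the point is that the evolution of $y$ together with the strict convexity $k < -c$ (equivalently, a lower bound on $-\rho''$) controls the geometry near a would-be minimum. Concretely, one combines the height equation with the observation that at a spatial minimum of $y = \rho$ along the profile, $\rho' = 0$ and $\rho'' \geq 0$; but then $H = k + p = \frac{-\rho''}{(1+\rho'^2)^{3/2}} + \frac{1}{\rho\sqrt{1+\rho'^2}} = -\rho'' + \frac{1}{\rho} \geq \frac{1}{\rho} > 0$, directly contradicting $H < -c < 0$. Thus no spatial minimum of the profile can lie in $\Omega_t^-$, which already says that on each time slice, $\Omega_t^-$ avoids the thinnest parts of the surface.

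To upgrade this slice-wise statement to the claimed uniform bound $\inf_{\Omega^-} y = \inf_{\Gamma_{\Omega^-}} y$, I would run a parabolic comparison/continuity argument: define $m(t) = \inf_{\Omega_t^-} y$ and show that $m$ can only decrease by ``entering through the boundary $\Gamma_{\Omega^-}$.'' If at some time the infimum over the closure is attained at a genuine interior point of the parabolic domain $\Omega^-$, then near that point all of $\Omega^-$ has $y$ close to the infimum value; on that region $H < -c$ holds, yet the computation above shows that wherever $\rho$ has an interior spatial critical point the mean curvature is positive. Hence the infimum of $y$ on the region where $H<-c$ is attained either at a spatial point with $\rho' \neq 0$ — which cannot be a local min of $\rho$ — or on $\Gamma_{\Omega^-}$. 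Pushing this through, the spatial infimum of $y$ restricted to $\{H < -c\}$ is always attained on $\Gamma_{\Omega^-}$, and then the bound $\inf_{\Gamma_{\Omega^-}} y \geq c$ follows because $\Gamma_{\Omega^-}$ is a compact subset of the smooth space-time track $\Omega$ (away from the singular time, by definition of $\Omega^-$ with its fixed threshold $-c$), on which $y > 0$ is continuous. The delicate point requiring care — and where I would invoke the non-cylindrical maximum principle machinery referenced in the Appendix — is justifying that the infimum over the possibly irregular set $\overline{\Omega^-}$ is attained and that the first-derivative test is legitimate at boundary points of $\Gamma_{\Omega^-}$ that are not smooth; the interior argument itself is the clean algebraic fact that $H < 0$ is impossible at a waist of an axially symmetric surface.
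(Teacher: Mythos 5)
Your central algebraic step is wrong, and it is the point on which the whole argument pivots. At an interior spatial minimum of the profile you have $\rho'=0$ and $\rho''\geq 0$, hence $k=-\rho''\leq 0$ and $H=k+p=\tfrac{1}{\rho}-\rho''\leq \tfrac{1}{\rho}$; the inequality you wrote, $H=-\rho''+\tfrac{1}{\rho}\geq\tfrac{1}{\rho}$, would require $\rho''\leq 0$, i.e.\ a local \emph{maximum}. The geometric fact you then invoke (``$H<0$ is impossible at a waist of an axially symmetric surface'') is simply false: a neck with profile $\rho=c\cosh(\lambda x_1)$ near $x_1=0$ with $\lambda>1/c$ has $H=\tfrac{1}{c}-c\lambda^2<0$ at its waist, the catenoid being the borderline case $H=0$. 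So $\Omega_t^-$ can perfectly well contain the thinnest part of the surface, the slice-wise statement fails, and the ``upgrade'' built on it collapses. (The paper's closing Remark only asserts that $H<0$ cannot hold \emph{everywhere}, not at a waist.) A second, independent gap is your final step: you claim $\inf_{\Gamma_{\Omega^-}}y>0$ because $\Gamma_{\Omega^-}$ is a compact subset of the smooth space-time track. But $\Omega^-$ is a union over all $t<T$, so its parabolic boundary (points with $H=-c$, or at $t=0$) can accumulate at the singular time, and a priori $y$ could tend to $0$ along such lateral boundary points as $t\to T$; ruling this out is precisely the content of the lemma, so the compactness assertion begs the question.

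The paper's proof is Lagrangian and much more elementary than the Eulerian maximum-principle route you attempted. Along the trajectory of a material point the radial coordinate satisfies $\frac{dy}{dt}=-H\langle\nu,\omega\rangle=-Hpy$, which is strictly positive in $\Omega^-$ since $H<-c$, $p>0$, $y>0$; hence $y$ increases while a point remains in $\Omega^-$, and the infimum of $y$ over $\Omega^-$ is attained on $\Gamma_{\Omega^-}$ where trajectories enter. For the positive lower bound, the paper argues by contradiction with a threshold enlargement: if $y$ reached zero on $\Gamma_{\Omega^-}$ at some time $t_*$, it would have to have been decreasing immediately beforehand; but the same monotonicity holds on the larger region $\bar{\Omega}^-\supset\Omega^-$ defined by a smaller threshold $0<\bar{c}<c$, which contains a space-time neighbourhood of the point in question, giving $\frac{dy}{dt}>0$ there and a contradiction. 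Neither the PDE $\partial_t y=\Delta y-\tfrac{1}{y}$ nor the non-cylindrical maximum principle of the Appendix is needed for this lemma.
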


\begin{proof}
In $\Omega^- $ we have
\[
 \frac{dy}{dt} = -Hpy > 0 .
\]
That is to say, the height function is increasing in $\Omega^-$, from which we deduce that
\[ \inf_{ \Omega^- } y = \inf_{ \Gamma_{\Omega^-}} y \, . \]
\noindent
Now suppose that $y$ reaches zero on $\Gamma_{\Omega^-}$ at some time $t_*$. In particular, it must therefore have decreased immediately before $t_*$. Consider a constant $0<\bar{c}<c$ and the corresponding domain $\bar{\Omega}^-\supset \Omega^-$. On this new domain we once again have $\frac{dy}{dt} > 0 $, yielding a contradiction. 
\end{proof}

\begin{lemma} \label{Lemma_UV} {\emph{(Gradient Estimate)}}
There exists a constant $c>0$ depending only on the initial hypersurface $M_0$ such that $ yv \leq  c$.
\end{lemma}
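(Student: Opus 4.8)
The plan is to apply the parabolic maximum principle to the product $w:=yv$, exploiting a cancellation in its evolution equation. First I would differentiate with the product rule, using (i) and (ii) of Lemma~\ref{Lemma_Evolution_Equations_2}:
\begin{align*}
\partial_t(yv)
&=v\Bigl(\Delta y-\tfrac1y\Bigr)+y\Bigl(\Delta v-|A|^2v+\tfrac{v}{y^2}-\tfrac2v|\nabla v|^2\Bigr)\\
&=v\Delta y+y\Delta v-y|A|^2v-\tfrac{2y}{v}|\nabla v|^2,
\end{align*}
the key observation being that the term $-v/y$ coming from (i) is cancelled exactly by the $+v/y$ produced by the $v/y^2$ term in (ii). Inserting $v\Delta y+y\Delta v=\Delta(yv)-2\langle\nabla y,\nabla v\rangle$ then yields
\[
\partial_t w=\Delta w-2\langle\nabla y,\nabla v\rangle-|A|^2w-\tfrac{2y}{v}|\nabla v|^2 .
\]

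Next I would run the maximum principle on the compact surfaces $M_t$; note that $w$ is smooth and positive for every $t<T$, since the flow is smooth and $y>0$ up to the first singular time. At a spatial maximum of $w$ one has $\nabla w=0$, i.e.\ $v\nabla y=-y\nabla v$, equivalently $\nabla v=-\tfrac{v}{y}\nabla y$. Substituting this into the two gradient terms gives
\[
-2\langle\nabla y,\nabla v\rangle-\tfrac{2y}{v}|\nabla v|^2=\tfrac{2v}{y}|\nabla y|^2-\tfrac{2v}{y}|\nabla y|^2=0 ,
\]
so they cancel identically at the maximum. Since also $\Delta w\le 0$ there and $|A|^2w\ge 0$ (in fact $|A|^2w\ge p^2w=(yv)^{-1}>0$), we obtain $\partial_t w\le 0$ at the maximum. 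Hence $t\mapsto\max_{M_t}w$ is non-increasing, and so $yv\le\max_{M_0}(yv)=:c$, a constant depending only on $M_0$.

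It remains to account for the Neumann boundary. Here I would invoke the reflection/periodic extension recorded in Section~\ref{Section_Notation}: the flow extends smoothly across the planes $x_1=a$ and $x_1=b$, and under these even reflections $w=yv$ is an even function of $x_1$, so its conormal derivative vanishes along $\partial M_t$ and a boundary maximum may be treated exactly as an interior one in the argument above. (Directly: along the profile curve $w=\rho\sqrt{1+\rho'^2}$, whose $x_1$-derivative is a multiple of $\rho'$ and hence vanishes on $\partial M_t$ by the orthogonality condition $\rho'(a)=\rho'(b)=0$.)

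The one step requiring genuine care is the bookkeeping of the gradient terms: $-2\langle\nabla y,\nabla v\rangle$ is not sign-definite on its own, so the argument really does rely on evaluating it together with $-\tfrac{2y}{v}|\nabla v|^2$ precisely at the maximum, where the constraint $\nabla w=0$ forces their sum to vanish. The cancellation of the $1/y$ terms, the sign of $|A|^2w$, and the boundary reduction are all routine.
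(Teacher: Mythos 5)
Your proof is correct. The paper itself does not prove this lemma --- it simply cites Lemma 5.2 of \cite{JHSK1} --- so you have supplied a self-contained argument of the standard kind that the reference presumably also uses: a maximum principle applied to the product $w=yv$. Your computations check out: combining (i) and (ii) of Lemma~\ref{Lemma_Evolution_Equations_2} does cancel the $\pm v/y$ terms exactly, the identity $v\Delta y+y\Delta v=\Delta w-2\langle\nabla y,\nabla v\rangle$ is right, and at a point where $\nabla w=0$ the substitution $\nabla v=-\tfrac{v}{y}\nabla y$ makes the two gradient terms cancel, leaving $\partial_t w\leq \Delta w-|A|^2w\leq 0$ there (indeed $|A|^2w\geq p^2w=(yv)^{-1}>0$, since $p=(yv)^{-1}$ in this setting). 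Your treatment of the Neumann boundary is also consistent with the paper's own framework: Section~\ref{Section_Notation} notes the flow is equivalent to a periodic evolution along the $x_1$-axis, under which $w=\rho\sqrt{1+\rho'^2}$ extends evenly and its $x_1$-derivative is a multiple of $\rho'$, vanishing at $x_1=a,b$, so boundary maxima cause no difficulty. The conclusion $yv\leq\max_{M_0}(yv)$ gives precisely the claimed constant depending only on $M_0$.
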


\begin{proof}
 See Lemma 5.2 in \cite{JHSK1}.
\end{proof}

\noindent
\noindent
In particular, Lemma \ref{Lemma_UV} controls the gradient function away from the axis of rotation. This result provides the first indication that type II singularities can't develop in our setting.  
Combining this with Lemma \ref{Lemma_yLowerBound} we can therefore find a constant $c >0$ depending on $M_0$ and $\Gamma_{\Omega^-}$ such that $v\vert_{\Omega^-} \leq c \,.$

\begin{proposition} \label{Prop_KonP} {\emph{(Curvature Estimate)}}
There is a constant $c>0$ depending only on the initial hypersurface $M_0$ such that  $\frac{k}{p} \leq c$.
\end{proposition}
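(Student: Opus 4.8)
The plan is to derive a scalar parabolic inequality for the ratio $w := k/p$ and apply the maximum principle. First I would compute the evolution equation for $w$ using the quotient rule together with the evolution equations (iii) and (iv) from Lemma \ref{Lemma_Evolution_Equations_2}. Since $\frac{\partial}{\partial t} k = \Delta k + |A|^2 k - 2q^2(k-p)$ and $\frac{\partial}{\partial t} p = \Delta p + |A|^2 p + 2q^2(k-p)$, the $|A|^2$ terms should cancel in the expression for $\partial_t w$, leaving
\begin{equation*}
\frac{\partial}{\partial t} w = \Delta w + \frac{2}{p}\langle \nabla p, \nabla w\rangle - 2q^2(k-p)\Big(\frac{1}{p} + \frac{k}{p^2}\Big) \, .
\end{equation*}
The crucial observation is the sign of the zeroth-order term: since $p>0$ (as $p = 1/(\rho\sqrt{1+\rho'^2})$), the factor $\frac{1}{p} + \frac{k}{p^2} = \frac{1}{p}(1 + w)$ has the same sign as $1+w$, and $-2q^2(k-p) = -2q^2 p(w-1)$. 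So the reaction term is $-2q^2(w-1)(1+w) = -2q^2(w^2 - 1)$, which is non-positive precisely when $w \geq 1$, i.e. exactly in the regime where the maximum could try to grow. This is the structural miracle that makes the estimate work.

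Next I would invoke the maximum principle on the spacetime domain $\Omega$. At an interior spatial maximum of $w$ one has $\nabla w = 0$ and $\Delta w \leq 0$, so wherever $w \geq 1$ the above inequality gives $\partial_t w \leq 0$; hence $\max_{M_t} w$ is non-increasing as long as it stays above $1$, and is in any case bounded by $\max\{1, \max_{M_0} w\}$. The boundary contribution must also be handled: at the Neumann boundary $x_1 = a, b$ the surface meets the bounding planes orthogonally, so $\nu \perp \mathbf{i}_1$ and $\tau_1 \parallel \mathbf{i}_1$ there; I would check (this is standard in this setting, cf. \cite{GH2, MA1, JHSK1}) that the conormal derivative of $w$ at the boundary is controlled — in the axially symmetric orthogonal setting the reflection principle extends the flow to a periodic flow with no boundary, so $w$ extends smoothly and the interior argument applies directly on the doubled (periodic) surface. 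This gives $\frac{k}{p} \leq c$ with $c = \max\{1, \max_{M_0} k/p\}$ depending only on $M_0$.

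The main obstacle I anticipate is twofold. First, one must be careful that $p$ does not vanish where we divide by it; but $p = 1/(\rho\sqrt{1+\rho'^2})$ is strictly positive as long as the surface is smooth and does not touch the axis, and on the maximal smooth interval $[0,T)$ this is guaranteed (the height function stays positive by the structure of the flow away from pinch points; cf. Lemma \ref{Lemma_yLowerBound} for the relevant lower bound in $\Omega^-$, and the analogous openness argument on all of $\Omega$). Second, the boundary term requires justifying that the Neumann condition is preserved and that it forces the right sign (or vanishing) of the normal derivative of $w$ at $\partial M_t$; the cleanest route is the even reflection / periodicity already used in \cite{JHSK1}, under which $k$ and $p$ both extend as even functions across $x_1 = a, b$, so $w$ extends $C^1$ across the boundary and the maximum principle applies with no boundary term at all. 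Modulo these points, the computation of $\partial_t w$ and the sign analysis of the reaction term are the heart of the argument and are short.
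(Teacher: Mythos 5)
Your proposal is correct and reproduces essentially the argument the paper relies on: the paper's proof is simply a citation to section 5 of \cite{GH2}, where Huisken obtains exactly this bound by computing the evolution of $k/p$, observing the favourable reaction term $-2q^2\bigl((k/p)^2-1\bigr)$, and applying the maximum principle. Your treatment of the Neumann boundary by even reflection to a periodic surface is also consistent with the paper, which notes in Section \ref{Section_Notation} that the flow is equivalent to that of a periodic surface along the $x_1$-axis, so no new boundary terms arise.
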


\begin{proof}
 See section 5 of \cite{GH2}.
\end{proof}


\noindent
The next result is a generalisation of Proposition 5.4 in \cite{JHSK1}. For the convenience of the reader, we include the full proof below.

\begin{proposition} \label{Prop_KonP2} {\emph{(Absolute Curvature Estimate)}}
Let $c>0$. Suppose that $l_0\in M^2$ and $t_0\in [0,T)$ are such that $H(l_0,t_0)\geq 0$ or $|H(l_0,t_0)|\leq c$. There exists a constant $C>0$ depending only on $c$ and the initial hypersurface $M_0$ such that $\frac{|k(l_0,t_0)|}{p(l_0,t_0)} \leq C$. 
\end{proposition}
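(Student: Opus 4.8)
The goal is to upgrade Proposition \ref{Prop_KonP}, which gives $k/p \leq c$ everywhere, to a two-sided bound $|k|/p \leq C$ at those space-time points where either $H \geq 0$ or $|H| \leq c$. Since the upper bound $k/p \leq c_1$ is already in hand, it remains to produce a lower bound $k/p \geq -C$ at the relevant points, i.e. to control $k$ from below (equivalently, to bound the ratio $k/p$ from below) using the sign/size hypothesis on $H$. The plan is to exploit the simple algebraic identity $H = k + p$, which in the axially symmetric setting expresses the mean curvature as the sum of the two principal curvatures. From this we get $k = H - p$, hence
\[
\frac{k}{p} = \frac{H}{p} - 1 .
\]
So a lower bound on $k/p$ is equivalent to a lower bound on $H/p$.

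First I would handle the case $H(l_0,t_0) \geq 0$: then $H/p \geq 0$ (recall $p = 1/(\rho\sqrt{1+\rho'^2}) > 0$ is strictly positive away from the axis, and at an interior point of $M^2$ with $t_0 < T$ the surface has not reached the axis), so immediately $k/p \geq -1$, and combined with the upper bound from Proposition \ref{Prop_KonP} this gives $|k|/p \leq \max\{c_1, 1\}$. The substantive case is $|H(l_0,t_0)| \leq c$, where I need $H/p \geq -C'$, i.e. an upper bound on $p/|H|$ is not quite what is wanted — rather I need $H/p$ bounded below, which (since $|H| \leq c$) follows if $p$ is bounded below away from zero at $(l_0,t_0)$, OR if $p$ itself is controlled. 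Here is where the earlier a priori estimates enter: by Lemma \ref{Lemma_UV} we have $yv \leq c_2$, and $p = \langle \tau_1, \mathbf{i}_1 \rangle y^{-1}$ with $v = \langle \omega, \nu\rangle^{-1} = \sqrt{1+\rho'^2}$, so that $p = 1/(yv) \cdot \langle\tau_1,\mathbf{i}_1\rangle v \cdot \ldots$; more directly $p = 1/(\rho\sqrt{1+\rho'^2})$ and $yv$ corresponds to $\rho\sqrt{1+\rho'^2}$, hence $p = 1/(yv) \geq 1/c_2 > 0$ wherever the gradient estimate holds. Thus $p$ is bounded \emph{below} by a universal constant at \emph{every} point (not just in $\Omega^-$), giving
\[
\frac{k}{p} = \frac{H}{p} - 1 \geq \frac{-c}{1/c_2} - 1 = -c\,c_2 - 1 ,
\]
and combining with Proposition \ref{Prop_KonP} yields $|k(l_0,t_0)|/p(l_0,t_0) \leq \max\{c_1,\, c\,c_2 + 1\} =: C$, depending only on $c$ and $M_0$.

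The one point that requires care — and which I expect to be the main technical obstacle — is justifying that Lemma \ref{Lemma_UV} really delivers a \emph{pointwise lower bound on $p$ valid at all of $M^2 \times [0,T)$}, rather than merely a bound in the negative-mean-curvature region: the statement of Lemma \ref{Lemma_UV} asserts $yv \leq c$ with a constant depending only on $M_0$, with no restriction to $\Omega^-$, so $p = (yv)^{-1} \geq c^{-1}$ genuinely holds everywhere the flow is smooth, and this is exactly what is used here. I would also double-check the sign conventions so that $H = k + p$ (with both $k$ and $p$ as defined in the excerpt) holds as stated, and note that at any $(l_0,t_0)$ with $t_0 < T$ the surface is smooth and disjoint from the axis, so $y(l_0,t_0) > 0$ and $p(l_0,t_0) > 0$ are legitimate. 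Everything else is the elementary algebra above; no maximum principle or evolution equation is needed beyond what Propositions \ref{Prop_KonP} and Lemmas \ref{Lemma_yLowerBound}, \ref{Lemma_UV} already supply.
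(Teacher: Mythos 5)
Your proposal is correct and is essentially the paper's own argument: both rest on the identity $H=k+p$ to get the lower bound $k/p\geq H/p-1$, use Proposition \ref{Prop_KonP} for the upper bound, and invoke Lemma \ref{Lemma_UV} in the form $\tfrac{1}{p}=yv\leq c$ (valid on all of $M^2\times[0,T)$, as you correctly note) to handle the case $|H|\leq c$ with $k<0$. The only difference is cosmetic: the paper organises the algebra as a case split on the sign of $k$ rather than writing $k/p=H/p-1$.
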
  

\begin{proof}
If both $H$ and $k$ are positive, Proposition \ref{Prop_KonP} yields the pointwise estimate
\[
\frac{|k|}{p} = \frac{k}{p} \leq c. 
\]
If $H \geq 0$ and $k <0$ , then from $k + p \geq 0$ we obtain $ -|k| + p \geq 0 $ and
\[
 \frac{|k|}{p} \leq 1\, .
\]
If $|H| \leq  c $ and $k>0$ then $H>0$ and the result holds, so it remains to consider $|H| \leq c$ and $k <0 $. We have $- |k| + p \geq -c$ so that
\[
\frac{|k|}{p} \leq  1 +  \frac{c}{p}.
\]
From Lemma \ref{Lemma_UV} we have $\frac{1}{p} = vy \leq  c$. This completes the proof. 
\end{proof}

\section{Negative mean curvature} \label{Section_Negative_MC}

We use direct \textit{a priori} estimates to establish that no singularities can develop in regions of negative mean curvature. This section has some overlap with parts of \cite{MASK2}, which studies the first singular time for volume preserving mean curvature flow. \\

\noindent
Let $\tilde{c}_0>0 $ and consider the corresponding sets $\tilde{\Omega}_t^-\subset M_t$ such that $H<-\tilde{c}_0$. In addition, we define $\tilde{\Omega}^-:=\cup_{t<T}\tilde{\Omega}_t^-\times\{t\} $. \\

\noindent
The following result is a generalisation of Proposition 4.6 in \cite{MASK2}. 

\begin{proposition}\label{Prop_AsquaredBddAwayFromAxisNew} {\emph{(Curvature away from Axis)}}
There exists a constant $c>0$ such that $|A|^2 \leq c$ in $\tilde{\Omega}^-$.
\end{proposition}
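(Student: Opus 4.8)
The plan is to show that in $\tilde\Omega^-$ the full second fundamental form is controlled by combining the two principal curvatures $p$ and $k$. We already know from Lemma \ref{Lemma_UV} and Lemma \ref{Lemma_yLowerBound} that on $\tilde\Omega^-$ we have $p = (yv)^{-1} \leq c$, since $y$ is bounded below and $yv$ is bounded above. Thus $p$ is already under control, and it remains only to bound $|k|$. The idea is to use the relation $H = k + p$ together with the sign condition $H < -\tilde c_0 < 0$: this forces $k < -p$, so $k$ is negative and $|k| = -k = p - H = p + |H|$. Hence an upper bound on $|k|$ is equivalent to an upper bound on $|H|$, and the problem reduces to showing $|H|$ (equivalently $-H$) is bounded on $\tilde\Omega^-$.

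\textbf{Bounding $-H$ on $\tilde\Omega^-$.} For this step I would apply the maximum principle to the evolution equation for $H$ on the non-cylindrical domain $\tilde\Omega^-$. From Lemma \ref{Lemma_Evolution_Equations_2}(vi) we have $\frac{\partial}{\partial t} H = \Delta H + H|A|^2$, so setting $w = -H$ gives $\frac{\partial}{\partial t} w = \Delta w + w|A|^2$ with $w > \tilde c_0 > 0$ inside $\tilde\Omega^-$. The point is that on the parabolic boundary $\Gamma_{\tilde\Omega^-}$ we have $H = -\tilde c_0$ exactly (this is where the region is entered or exited), so $w = \tilde c_0$ there, while at the initial time $w = -H(\cdot,0)$ is bounded by a constant depending on $M_0$. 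The difficulty is that $|A|^2$ appears as a coefficient with the "wrong" sign for a naive maximum principle — $w$ could in principle blow up. To handle this I would instead consider the quotient $\frac{|k|}{p} = \frac{-k}{p}$, or equivalently work with $-\frac{H}{p} = \frac{|k|}{p} - 1$, and exploit Proposition \ref{Prop_KonP2}: since on the parabolic boundary of $\tilde\Omega^-$ we have $|H| = \tilde c_0$, Proposition \ref{Prop_KonP2} applies there and gives $\frac{|k|}{p} \leq C$ on $\Gamma_{\tilde\Omega^-}$, hence $\frac{-H}{p} \leq C$ on $\Gamma_{\tilde\Omega^-}$. I would then derive the evolution equation for $z := -\frac{H}{p} = \frac{k}{p}\big|_{\text{sign}} \cdot(-1)$... more precisely for $\frac{k}{p}$, using parts (iii) and (iv) of Lemma \ref{Lemma_Evolution_Equations_2}, just as in section 5 of \cite{GH2}, and check that the resulting reaction term has a favourable sign so that the maximum principle on the non-cylindrical domain $\tilde\Omega^-$ yields $\sup_{\tilde\Omega^-}\frac{k}{p} \leq \max\{\sup_{\Gamma_{\tilde\Omega^-}}\frac{k}{p},\ \text{const}(M_0)\} =: C'$. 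This gives $|k| \leq C' p \leq C' c$ on $\tilde\Omega^-$.

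\textbf{Conclusion.} Once $p \leq c$ and $|k| \leq C' c$ on $\tilde\Omega^-$, we conclude $|A|^2 = p^2 + k^2 \leq c^2 + (C'c)^2 =: c''$, which is the claim.

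\textbf{Main obstacle.} The genuinely delicate point is the maximum-principle argument on the non-cylindrical domain $\tilde\Omega^-$: unlike a fixed domain, $\tilde\Omega^-$ has a moving lateral boundary $\Gamma_{\tilde\Omega^-}$, and one must verify that the parabolic maximum principle still applies — that any interior spatial maximum of the relevant quantity (either $\frac{k}{p}$ or a suitable modification) propagates correctly, and that no new maximum can be created through the moving boundary except where we already have the bound $|H| = \tilde c_0$. This is exactly the type of non-standard maximum principle flagged in the outline as being deferred to the Appendix and to \cite{MASK2}; the sign computations in the evolution equation for $\frac{k}{p}$ (which is where Proposition \ref{Prop_KonP} came from in the $H > 0$ setting) need to be re-examined to confirm they remain valid under the hypothesis $H < -\tilde c_0$ rather than $H > 0$.
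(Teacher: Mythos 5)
There is a genuine gap at the step you yourself flag as the "main obstacle": the reaction term does \emph{not} have a favourable sign for the bound you need. Writing $f = k/p$, parts (iii) and (iv) of Lemma \ref{Lemma_Evolution_Equations_2} give
\begin{equation*}
\left(\frac{\partial}{\partial t} - \Delta\right) f \;=\; \frac{2}{p}\left\langle \nabla p, \nabla f \right\rangle \;-\; 2q^2\left(f^2 - 1\right).
\end{equation*}
The term $-2q^2(f^2-1)$ is favourable only for an \emph{upper} bound on $f$ (at an interior maximum with $f>1$ it is nonpositive — this is exactly how Proposition \ref{Prop_KonP} is obtained in \cite{GH2}). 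For what you need, namely an upper bound on $-f = |k|/p$ in the region where $k<-p$, you are looking at an interior \emph{minimum} of $f$ with $f<-1$, where $f^2-1>0$ and the reaction term pushes $f$ further down. Equivalently, $z=-f$ satisfies $(\partial_t-\Delta)z \le \frac{2}{p}\langle\nabla p,\nabla z\rangle + 2q^2(z^2-1)$ at a maximum, and even with $q^2\le y^{-2}\le c$ (from the height bound) the comparison ODE $\dot z = 2c(z^2-1)$ blows up in finite time, so no uniform bound up to $T$ follows. The same obstruction is why your first attempt with $w=-H$, $(\partial_t-\Delta)w = w|A|^2$, fails: in both formulations the quantity you want to bound sits on the wrong side of a quadratically bad term, and switching to the non-cylindrical maximum principle does not change the sign structure. (A smaller slip: your bound $p\le c$ on $\tilde\Omega^-$ follows from $v\ge 1$ and the height lower bound, since $p=(yv)^{-1}\le y^{-1}$; Lemma \ref{Lemma_UV} ($yv\le c$) actually gives the \emph{lower} bound $p\ge 1/c$, not the upper one.)

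The paper avoids this by not decomposing $|A|^2$ into $p$ and $k$ at all. It applies the maximum principle to $g=|A|^2\varphi(v^2)$ with $\varphi(r)=r/(\lambda-\mu r)$, whose evolution inequality contains the self-improving term $-2\mu g^2$. In $\tilde{\Omega}^-$ the remaining bad term $\frac{2(n-1)}{y^2}v^2\varphi'|A|^2$ is only linear in $g$, because $y$ is bounded below (Lemma \ref{Lemma_yLowerBound}) and $v$ is bounded (Lemmas \ref{Lemma_yLowerBound} and \ref{Lemma_UV}); hence any interior maximum of $g$ above a computable constant is impossible, and $g$ is controlled by its values on the parabolic boundary of $\tilde\Omega^-$ plus a constant, as in \eqref{eq_Ev_Eq_1.43}. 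Only at that point does the argument you correctly sketch for the boundary enter: on $\partial\tilde\Omega^-_t$ one has $|H|=\tilde c_0$, so Proposition \ref{Prop_KonP2} gives $|k|/p\le c$ there and $|A|^2\le(1+c^2)y^{-2}\le c$ by the height bound; finally $(\varphi(v^2))^{-1}$ is bounded since $v\ge1$ and $\lambda>\mu\max v^2$, yielding the interior bound on $|A|^2$. So your treatment of the parabolic boundary data matches the paper, but the interior propagation mechanism — the quadratic good term coming from the $\varphi(v^2)$ weight — is the missing idea, and without it the maximum-principle step in your proposal does not close.
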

\begin{proof}
Consider the product $g = |A|^2 \varphi(v^2)$, where $\varphi(r) = \frac{r}{\lambda - \mu r}$ and $\lambda, \mu >0 $ are free constants. The evolution equation for $g$ yields the estimate
\[\left ( \frac{d}{dt}- \Delta \right) g \leq -2\mu g^2 -2\lambda \varphi v^{-3} \left\langle \nabla v \, , \nabla g \right\rangle - \frac{2\lambda \mu}{(\lambda-\mu v^2)^2}|\nabla v|^2 g 
+ \frac{2(n-1)}{y^2}v^2 \varphi'|A|^2  \, .\]

\noindent
Following Proposition 6.2 of \cite{AthKan1} with $\mu>\frac{3}{4}$ and $\lambda>\mu \max v^2$ we can find a constant $c>0$ depending on $\mu$, $\lambda$, $\tilde{c}_0$ and $M_0$ such that
\begin{equation}\label{eq_Ev_Eq_1.43}
 |A|^2\varphi(v^2)  \leq \max \left( \max_{\tilde{\Omega}^-_0} |A|^2\varphi(v^2), \hspace{2mm}  \max_{\begin{subarray}{l} {\partial \tilde{\Omega}^-_t}\\ t<T \end{subarray}} |A|^2\varphi(v^2), \hspace{2mm} c \right) \, .
\end{equation}
\noindent
Note that by construction we have $|H| =\tilde{c}_0 $ on $\partial \tilde{\Omega}^-_t$ for all $0<t<T$. In \cite{MASK2}, there is a positive mean curvature restriction on the boundary. Proposition \ref{Prop_KonP2} now yields a constant $c >0$ depending only on $\tilde{c}_0$ and $M_0$ such that on $\partial \tilde{\Omega}^-_t$ we have $\frac{|k|}{p} \leq c$ for all $t<T$. On $\partial \tilde{\Omega}^-_t$ we have
\begin{align*}
 |A|^2 &= k^2 + p^2 
 \leq (1+c^2)p^2  
  \leq (1 + c^2) y^{-2} \leq c  
 \end{align*}
\noindent
for all $t<T$. The final estimate follows from Lemma~\ref{Lemma_yLowerBound}.  Since $v$ is bounded in $\tilde{\Omega}^-$, $\varphi(v^2)$ is bounded from above.
The product $|A|^2\varphi(v^2) $ is therefore bounded on $\Gamma_{\tilde{\Omega}^-}$ and on $\tilde{\Omega}^-$ courtesy of (\ref{eq_Ev_Eq_1.43}).  By our choice of $\lambda$, and since $v \geq 1 $, we have a bound on $(\varphi(v^2))^{-1}$. This completes the proof.\\
%
\end{proof}

\noindent
As in Proposition 4.8 of \cite{MASK2} we obtain as a consequence that $H$ is bounded from below in $\tilde{\Omega}^-$. That is, $H$ cannot go to $-\infty$.

\begin{corollary}\label{Lemma_HLowerBound} {\emph{(Mean Curvature Bound)}}
There exists a constant $c>0$ independent of time such that  $H(l, t) \geq -c$ for all $l \in \ M^2$ and $t\in [0,T)$. 
\end{corollary}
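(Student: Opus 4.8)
The plan is to reduce the estimate to the region $\tilde\Omega^-$ where $H<-\tilde c_0$, and there combine the uniform bound $|A|^2\le c_1$ supplied by Proposition~\ref{Prop_AsquaredBddAwayFromAxisNew} with the evolution equation (vi) of Lemma~\ref{Lemma_Evolution_Equations_2}. Outside $\tilde\Omega^-$ one trivially has $H\ge-\tilde c_0$ by construction, so it suffices to bound $H$ from below on $\tilde\Omega^-$; recall also that $T<\infty$ by the comparison principle.

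On $\tilde\Omega^-$ we have $H<0$ and $0\le|A|^2\le c_1$, hence $H\big(|A|^2-c_1\big)\ge 0$, so that
\[
\Big(\tfrac{\partial}{\partial t}-\Delta\Big)H \;=\; H|A|^2 \;\ge\; c_1 H .
\]
I would then set $w:=e^{-c_1 t}H$; a direct computation gives $\big(\partial_t-\Delta\big)w=e^{-c_1 t}\big[(\partial_t-\Delta)H-c_1 H\big]\ge 0$, so $w$ is a supersolution of the heat operator on $\tilde\Omega^-$ and therefore attains its infimum on the parabolic boundary $\Gamma_{\tilde\Omega^-}$. Next I would analyse $\Gamma_{\tilde\Omega^-}$: it decomposes into the initial part contained in $M_0$, on which $H\ge\min_{M_0}H=:-c_2$, and the lateral part on which, by continuity of $H$, we have $H=-\tilde c_0$; because of the Neumann condition we may pass to the periodic extension, so the planes $x_1=a,b$ contribute no genuine boundary. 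Thus $w\ge-\max(c_2,\tilde c_0)$ on $\Gamma_{\tilde\Omega^-}$ (using $e^{-c_1 t}\le 1$ and $H<0$ there), and the maximum principle gives $w\ge-\max(c_2,\tilde c_0)$ throughout $\tilde\Omega^-$. Unwinding, $H\ge-e^{c_1 t}\max(c_2,\tilde c_0)\ge-e^{c_1 T}\max(c_2,\tilde c_0)$ on $\tilde\Omega^-$, which together with the trivial bound elsewhere yields the claim with $c=\max\!\big(\tilde c_0,\,e^{c_1 T}\max(c_2,\tilde c_0)\big)$.

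The hardest step will be justifying the weak maximum principle on the non-cylindrical domain $\tilde\Omega^-$: its lateral boundary is the moving level set $\{H=-\tilde c_0\}$, which is in general non-smooth, and the set may have several components that appear and disappear in time. This is precisely the kind of statement collected in the Appendix (cf.\ \cite{MASK2}); in applying it one should verify that $H$ extends continuously up to $\Gamma_{\tilde\Omega^-}$ and that the decomposition of the parabolic boundary above is exhaustive, and then argue on $\tilde\Omega^-\cap\{t\le t_1\}$ for each fixed $t_1<T$ before letting $t_1\to T$.
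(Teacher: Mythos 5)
Your argument is correct, but it takes a much longer route than the paper. The paper's proof is a one-line pointwise observation: since $H$ is the trace of $A$, one has $H^2\leq 2|A|^2$ for a surface, so the bound $|A|^2\leq c$ in $\tilde\Omega^-$ from Proposition \ref{Prop_AsquaredBddAwayFromAxisNew} \emph{immediately} gives $H\geq -\sqrt{2c}$ there, and together with $H\geq-\tilde c_0$ on $\Omega\setminus\tilde\Omega^-$ this is the whole proof --- no evolution equation and no maximum principle are needed. Your reduction to $\tilde\Omega^-$ and your use of Proposition \ref{Prop_AsquaredBddAwayFromAxisNew} coincide with the paper, but you then run a parabolic minimum-principle argument: $(\partial_t-\Delta)H=H|A|^2\geq c_1H$ on $\tilde\Omega^-$ (valid since $H<0$ and $|A|^2\leq c_1$ there), the substitution $w=e^{-c_1t}H$, the non-cylindrical maximum principle of the Appendix, and a decomposition of $\Gamma_{\tilde\Omega^-}$ into the initial slice and the lateral level set $\{H=-\tilde c_0\}$ (with the Neumann boundary absorbed by periodic extension, consistent with Section \ref{Section_Notation}). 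All of these steps check out, and the issues you flag (non-smoothness of the lateral boundary, exhaustiveness of the boundary decomposition) are exactly what Proposition \ref{Prop_Max_Principles} is designed to handle, so the proof closes. What you lose relative to the paper is economy and sharpness: your constant carries a factor $e^{c_1T}$ (harmless, since $T<\infty$, but unnecessary), and the maximum-principle machinery would only be genuinely needed if the curvature bound were available merely on the parabolic boundary of $\tilde\Omega^-$ rather than, as Proposition \ref{Prop_AsquaredBddAwayFromAxisNew} provides, throughout its interior.
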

\begin{proof}
\noindent
By construction we have $H<0$ in $\tilde{\Omega}^-$ and $H\geq -\tilde{c}_0$ on $\Omega\setminus\tilde{\Omega}^-$. Using Proposition \ref{Prop_AsquaredBddAwayFromAxisNew}, we can find a constant $c>0$ such that $|A|^2 \leq c $ in $\tilde{\Omega}^-$.  The trivial inequality $H^2 \leq 2|A|^2$ completes the proof.
\end{proof}
\noindent
As a result of Proposition~\ref{Prop_AsquaredBddAwayFromAxisNew},
singularities can only develop in $\Omega\setminus\tilde{\Omega}^-$, which we investigate in the remaining sections.

\section{Rescaling}\label{sec:rescaling}

We established in Section \ref{Section_Negative_MC} that singularities are restricted to $\Omega\setminus\tilde{\Omega}^-$. It is well-known from \cite{AAG, MA2} that singularities of axially symmetric mean curvature flow are finite and discrete.
Let $T$ be the first singular time for the smooth evolution and let $(x_*,T) \in \Omega \subset \mathbb{R}^3\times \mathbb{R}^+$ be a singular point in space-time. It is therefore possible to analyse a space-time neighbourhood $N_{\epsilon} \subset \mathbb{R}^3\times \mathbb{R}^+$ centred at $(x_*,T)$ such that the flow is smooth inside $N_\epsilon \backslash (x_*,T)$. There are three possible cases:\\


\psset{xunit=1.0cm,yunit=1.0cm,dotstyle=o,dotsize=3pt 0,linewidth=0.8pt,arrowsize=3pt 2,arrowinset=0.25}
\begin{pspicture*}(-4.3,-0.5)(9.78,6.3)
\psline(1,5)(1,4)
\psline(-3,4)(5,4)
\psline{->}(-3,4)(-3,3)
\psline{->}(5,4)(5,3)
\psdots[dotstyle=*,linecolor=black](1,5)
\uput[90](1, 5){$N_\epsilon \subset \mathbb{R}^3\times \mathbb{R}^+$}
\uput[0](-3.94, 2.62){$|H| < c$ }
\uput[0](4, 2.62){$H \to \infty$}
\psline(5,2.3)(5,2)
\psline(3,2)(7,2)
\psline{->}(3,2)(3,1)
\psline{->}(7,2)(7,1)
\uput[0](2, 0.5){$\frac{|A|^2}{H^2}\to \infty$ }
\uput[0](6, 0.5){$\frac{|A|^2}{H^2}<c$}
\end{pspicture*}

\noindent
Next we introduce the parabolic rescaling techniques which will be used in section \ref{sec:bddMC} to analyse the cases $|H| < c$ and $H\to\infty $ with $|A|^2/H^2 \to\infty $. In particular, we use a standard contradiction argument in section \ref{sec:bddMC} to show that a singularity cannot develop in either of these cases. Singularities can therefore only occur if $H \to \infty$ and $|A|^2/H^2$ is bounded. This remaining case is covered in section~\ref{sec:boundedAonH}. \\

\noindent
We employ the rescaling procedure introduced in \cite{JHSK1}. Consider the smooth solution $M_t$ of \eqref{eq:int_2} for $t \in [0, T)$, and suppose that a singularity forms at the centre of the space-time neighbourhood $N_{\epsilon}$ at the singular time $T$. In particular, $|A|^2 \rightarrow \infty$ as $t \rightarrow T$. For integers $i\geq 1$, we select times $t_i\in\left[0 , T - \frac{1}{i} \right]$ and points $l_i \in M^2$ such that:
\begin{enumerate}
    \item $t_i\to T$
    \item $\mathbf{x}(l_i, t_i)$ lies on the $x_1x_{3}$ plane 
    \item \itemEq{\label{eq_Hbdd_eq15}
     |A|(l_i, t_i)  = \max_{ l \in M^{2}, \,  t \leq T- \frac{1}{i} }  |A|(l,t)  }
\end{enumerate}



\noindent
We write $\alpha_i = |A|(l_i, t_i) $ and $ \mathbf{x}_i = \mathbf{x}(l_i, t_i) \, .$ Note that for $i$ sufficiently large, $\mathbf{x}_i$ is contained in $N_{\epsilon}$. We now rescale $M_t$ to obtain the family $\tilde{M}_{i, \tau}$ defined by
\begin{equation} \label{HbddEq_1}
 \tilde{\mathbf{x}}_i(l \,, \tau) = \alpha_i \left( \mathbf{x}(l \,, \alpha_i^{-2} \tau + t_i )	- \left\langle \mathbf{x}_i , \mathbf{i}_1 \right\rangle \mathbf{i}_1 \right)  \, ,
\end{equation} 
where $ \tau \in [ -\alpha_i^2 t_i, \alpha_i^2 (T- t_i - \frac{1}{i} )] \, .$
\\

\noindent
Note that we rescale from a point on the axis of rotation corresponding to the point of maximum curvature, preserving axial symmetry. We define $\tilde{\rho}_{i , \tau}$ to be the generating curves of $\tilde{M}_{i, \tau}$. We denote by $|\tilde{A}_i| $ and $\tilde{H_i}$ the second fundamental form and mean curvature of $\tilde{M}_{i, \tau} \,$, respectively. By definition
$$  \tilde{H_i}( \cdot \,, \tau ) = \alpha_i^{-1} H(\cdot \,, \alpha_i^{-2} \tau + t_i ) \,  \hspace{3mm} \text{and} \hspace{3mm} |\tilde{A}_i|( \cdot \,, \tau ) = \alpha_i^{-1}	|A| (\cdot \,, \alpha_i^{-2} \tau + t_i )  \, .$$	
For $t \leq T - \frac{1}{i}$ we have 
\begin{equation}\label{HbddEq_2}
 \alpha_i^{-1}	|A| (\cdot \,, \alpha_i^{-2} \tau + t_i ) \leq 1 \, . 
\end{equation}
\noindent
Note that
\begin{equation}\label{rescaledMCF}
\frac{\partial}{\partial \tau} \tilde{\mathbf{x}}_i  		= - \alpha_i^{-1} H \nu \, 
										= - \tilde{H}_i \nu \, .
\end{equation}
The rescaled flows cannot drift away to infinity: applying Proposition \ref{Prop_KonP2} we can find a constant $c>0$ depending only on $\tilde{c}_0$ and $M_0$ such that
$$ |A|=\sqrt{k^2+p^2}\leq c p \leq c y^{-1} . $$
After rescaling, this becomes
$$ |\tilde{A}_i| \leq c (\alpha_{i} y)^{-1} =c \tilde{y}_i^{-1} . $$

\noindent
Since $|\tilde{A}_i|(l_i,0)=1$ for all $i$, we have a bound on $\tilde{y}$ and we can therefore extract a convergent subsequence of points on the $x_3$ axis.
\\

\noindent
Along the sequence of rescalings we have the uniform curvature bound $|\tilde{A}_i|^2 \leq 1 $. Since each rescaled flow again satisfies (\ref{rescaledMCF}), this gives rise to uniform bounds on all covariant derivatives of the second fundamental form, see for example \cite{GH1}. \\

\noindent
Using the Arzela-Ascoli theorem we can therefore find a further subsequence which converges uniformly in $C^{\infty}$ on compact subsets of $\mathbb{R}^{3}\times\mathbb{R}$ to a non-empty smooth limit flow which exists on an interval $(-\infty,\beta)$ where $\beta\in[0,\infty]$. The crucial step is to analyse the properties of this limit flow, which we label $\tilde{M}_{\infty,\tau}$.

\section{No singularities}\label{sec:bddMC} 

\noindent
In this section we use a standard contradiction argument to show that no singularities can develop as long as the mean curvature remains bounded. In addition, we show that no singularity can develop if both $H\to\infty$ and $|A|^2/H^2\to\infty$. The remaining scenario is analysed in the next section.

\begin{theorem}{\emph{(Bounded Mean Curvature)}}\label{Thm:BddMC}
Consider a smooth, axially symmetric solution $M_t$ of mean curvature flow \eqref{eq:int_2} with Neumann boundary on the time interval $[0,T)$ for some $T>0$. Then no singularity can develop if $H$ remains bounded. 
\end{theorem}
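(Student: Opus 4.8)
The plan is to argue by contradiction using the rescaling procedure recalled in Section~\ref{sec:rescaling}. Suppose a singularity forms at $(x_*,T)$ while $H$ stays bounded, say $|H|\le c_0$ on $\Omega$. Since $|A|^2\to\infty$ near the singular point, we may select the sequence $(l_i,t_i)$ of almost-maximal curvature points and form the rescaled flows $\tilde M_{i,\tau}$ as in \eqref{HbddEq_1}. By the curvature bound $|\tilde A_i|^2\le 1$ and the derivative estimates, a subsequence converges in $C^\infty_{\mathrm{loc}}$ to a smooth, non-empty limit flow $\tilde M_{\infty,\tau}$ on $(-\infty,\beta)$, with $|\tilde A_\infty|(l_\infty,0)=1$, so the limit is not flat. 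The point of the argument is to extract enough structure on the limit to contradict this.

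First I would track the mean curvature. Under the rescaling, $\tilde H_i=\alpha_i^{-1}H$ with $\alpha_i=|A|(l_i,t_i)\to\infty$ and $|H|\le c_0$, so $\tilde H_i\to 0$ uniformly on compact sets; hence the limit flow satisfies $\tilde H_\infty\equiv 0$. But a solution of mean curvature flow with vanishing mean curvature is a static minimal surface, and by the convergence $\frac{\partial}{\partial\tau}\tilde{\mathbf x}_\infty=-\tilde H_\infty\nu=0$, so $\tilde M_{\infty,\tau}$ is a fixed, complete, smoothly embedded minimal surface in $\mathbb R^3$ with bounded second fundamental form. Second, I would exploit the structure coming from Proposition~\ref{Prop_KonP2} (via the argument already in the excerpt): $|\tilde A_i|\le c\,\tilde y_i^{-1}$, which survives in the limit as $|\tilde A_\infty|\le c\,\tilde y_\infty^{-1}$. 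Combined with axial symmetry this forces the minimal limit surface to be non-flat only where the rotational radius $\tilde y_\infty$ stays bounded, i.e. near the axis; in particular $\tilde y_\infty$ cannot vanish identically, and the limit surface contains a point on the axis (the rescaled image of $\mathbf x_i$). An axially symmetric minimal surface in $\mathbb R^3$ generated by rotating a curve is, by the classical analysis of the ODE for minimal generating curves, either a plane perpendicular to the axis (excluded since $|\tilde A_\infty|\neq 0$ at the marked point) or a catenoid; but a catenoid does not meet the axis of rotation, contradicting the presence of the marked axis point with $|\tilde A_\infty|=1$ there. This contradiction shows no such singularity can form.

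I should also address the alternative conclusion announced after Theorem~\ref{Thm:BddMC} — that no singularity forms when $H\to\infty$ and $|A|^2/H^2\to\infty$ — although strictly the statement above is only the bounded-$H$ case. If that case is meant to be folded in, the same rescaling applies: since $\alpha_i=|A|(l_i,t_i)$ and $|A|^2/H^2\to\infty$ along the maximizing sequence, we get $\tilde H_i(l_i,0)=\alpha_i^{-1}H(l_i,t_i)\to 0$, and by the derivative estimates $\tilde H_\infty\equiv 0$ on the limit, reducing to exactly the same minimal-surface dichotomy. Either way the marked point forces a contradiction.

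The main obstacle I anticipate is the boundary. The points $\mathbf x_i$ could in principle accumulate at the Neumann boundary planes $x_1=a$ or $x_1=b$, in which case the rescaled limit would be a minimal surface in a half-space meeting the boundary plane orthogonally, rather than in all of $\mathbb R^3$. One has to rule this out — using the a priori estimates of Section~\ref{sec:AprioriEstimates} and the fact (from \cite{JHSK1}) that singularities are away from where $\mathbf x_i\to$ boundary, or by reflecting across the boundary plane to reduce to the entire-space case — before the classical catenoid/plane classification applies. A second, more minor point is verifying that the limit flow is genuinely non-empty and that the convergence is strong enough near the axis to preserve the pointwise relation $|\tilde A_\infty|\le c\,\tilde y_\infty^{-1}$ there; this should follow from the uniform curvature and height bounds recalled at the end of Section~\ref{sec:rescaling}.
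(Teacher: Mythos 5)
Your setup — contradiction, the rescaling \eqref{HbddEq_1}, $\tilde H_i=\alpha_i^{-1}H\to 0$, a static axially symmetric minimal limit with $|\tilde A_\infty|(l_\infty,0)=1$, hence plane or catenoid, with the plane excluded by non-flatness at the marked point — matches the paper up to the final step. But the step you use to exclude the catenoid contains a genuine error: the marked point is \emph{not} on the axis of rotation. The points $\mathbf x_i=\mathbf x(l_i,t_i)$ are chosen on the $x_1x_3$-plane (a normalization of the angular coordinate), and the recentering in \eqref{HbddEq_1} subtracts only the $x_1$-component, so the rescaled marked point $\tilde{\mathbf x}_i(l_i,0)$ lies on the $x_3$-axis at distance $\tilde y_i(l_i,0)=\alpha_i\,y(l_i,t_i)>0$ from the rotation axis; the bound $|\tilde A_i|\le c\,\tilde y_i^{-1}$ together with $|\tilde A_i|(l_i,0)=1$ only gives $\tilde y_i(l_i,0)\le c$, i.e.\ boundedness, not vanishing. (The paper's phrase ``we rescale from a point on the axis of rotation'' refers to the centre of rescaling, the projection of $\mathbf x_i$ onto the axis, not to the marked surface point.) Consequently the catenoid is a perfectly consistent candidate limit and cannot be dismissed because ``it does not meet the axis''; your argument therefore does not reach a contradiction in the main case.

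The paper closes this gap differently: it accepts that the limit is the catenoid $\hat y=c\cosh(c^{-1}\hat x_1)$ and derives the contradiction from the a priori gradient estimate of Lemma \ref{Lemma_UV}, $yv\le c$ on the original flow. Since $v$ is scale invariant and $y$ scales by $\alpha_i$, the rescaled quantity $\tilde v_i\tilde y_i$ converges locally to $\hat v\hat y=\tfrac{c}{2}\left(\cosh(2c^{-1}\hat x_1)+1\right)$, which is unbounded along the catenoid, while the inherited bound from Lemma \ref{Lemma_UV} forbids this; that is the contradiction. So to repair your proof you need an extra quantitative ingredient of this type (the $yv$ bound, or some other property the catenoid violates), not the incidence of the marked point with the axis. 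Your other remarks are minor by comparison: the boundary issue you raise is handled in the paper's setting by the equivalent periodic extension across the Neumann planes noted in Section \ref{Section_Notation}, and the $H\to\infty$, $|A|^2/H^2\to\infty$ case is stated and proved separately (Theorem 6.2), so it need not be folded into this statement.
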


\noindent
\begin{proof}
\noindent
Suppose in order to obtain a contradiction that a singularity forms at the point $x_*$ on the axis of rotation and at time $t=T$; in particular, $|A|^2 \rightarrow \infty$ as $t \rightarrow T$. We assume in addition that the mean curvature remains bounded in a space-time neighbourhood around the point $(x_*,T)$. We rescale using the procedure outlined in section \ref{sec:rescaling} and analyse the properties of the resultant limit flow $\tilde{M}_{\infty,\tau}$. Since by assumption $|H|<c$ for some $c>0$, we have
$$\lim_{i\to\infty}\tilde{H}_i=0.$$
The limit flow $\tilde{M}_{\infty,\tau}$ is a stationary solution and must therefore be the catenoid. We relabel this solution $\hat{M}$ and henceforth use a `hat' to indicate that a geometric quantity is associated with the catenoid.\\

\noindent
The catenoid is obtained by rotating $\hat{y} = c \cosh(c^{-1}\hat{x}_1) $ around the $x_1$ axis. For any $\epsilon>0$ and for any $l \in M^2$ we can find $I_0\in \mathbb{N}$ such that for any fixed $\tau_0\in (-\alpha_{I_0}^2t_{I_0},0)$ we have
$$ \hat{v}(l)\hat{y}(l) -\epsilon  \leq  \tilde{v}_i(l, \tau_0) \tilde{y}_i(l, \tau_0)  \hspace{5mm} \text{for all} \hspace{2mm} i >I_0 \, . $$

\noindent
On the catenoid, $\hat{v} = \sqrt{1 + \hat{y}'^2}= \cosh(c^{-1}\hat{x}_1)$. It therefore follows from Lemma \ref{Lemma_UV} that
\begin{equation*}
 \frac{c}{ 2\alpha_i}\left( \cosh (2c^{-1}\hat{x}_1) +1  \right) - \frac{\epsilon}{ \alpha_i} \leq c  \hspace{5mm} \text{for all} \hspace{2mm} i >I_0 \, .
\end{equation*}

\noindent
For fixed $i$, the left-hand side can be made as large as we like, yielding the desired contradiction. We can therefore find a constant $c>0$ such that $|A|^2 \leq c$ for all $t \in [0, T)$. Using standard theory, see for example \cite{GH1}, we obtain estimates on all covariant derivatives of $|A|$, allowing us to extend the flow beyond $T$. This completes the proof.  
\end{proof}

\noindent
We next consider the case in which $H\to\infty$ and $|A|^2/H^2\to\infty$.

\begin{theorem}
Consider a smooth, axially symmetric solution $M_t$ of mean curvature flow \eqref{eq:int_2} with Neumann boundary on the time interval $[0,T)$ for some $T>0$. Then no singularity can develop if both $H\to\infty$ and $|A|^2/H^2\to\infty$.
\end{theorem}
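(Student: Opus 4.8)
The plan is to mirror the contradiction argument of Theorem \ref{Thm:BddMC}, replacing the role of the catenoid with a suitable non-compact self-shrinker or minimal surface and exploiting the extra smallness $\tilde H_i^2/|\tilde A_i|^2 \to 0$. Suppose a singularity forms at a point $x_*$ on the axis of rotation at time $T$, so that $|A|^2 \to \infty$, with $H \to \infty$ and $|A|^2/H^2 \to \infty$ in a space-time neighbourhood of $(x_*, T)$. Apply the rescaling procedure of Section \ref{sec:rescaling} about the sequence of maximum-curvature points $(l_i, t_i)$ on the axis of rotation, so that $|\tilde A_i|(l_i, 0) = 1$ and $|\tilde A_i|^2 \le 1$ throughout. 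The uniform curvature bound together with the derivative estimates yields $C^\infty_{\mathrm{loc}}$ convergence of a subsequence to a non-empty smooth limit flow $\tilde M_{\infty, \tau}$ on $(-\infty, \beta)$. The key local observation is that after rescaling, the hypothesis $|A|^2/H^2 \to \infty$ becomes $\tilde H_i / |\tilde A_i| \to 0$ in $C^0_{\mathrm{loc}}$; since $|\tilde A_\infty|$ is bounded (and equal to $1$ at the origin, hence not identically zero), this forces $\tilde H_\infty \equiv 0$ on the limit flow. Therefore $\tilde M_{\infty, \tau}$ is again a \emph{static} minimal surface, and by axial symmetry (preserved under the rescaling from a point on the axis) it is the catenoid, which we relabel $\hat M$.

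Having reduced to the same limiting object $\hat M$ as in Theorem \ref{Thm:BddMC}, the remaining steps are identical. Write $\hat y = c \cosh(c^{-1} \hat x_1)$ for the generating curve of the catenoid, so that $\hat v = \sqrt{1 + \hat y'^2} = \cosh(c^{-1} \hat x_1)$. Fix $l \in M^2$ and $\epsilon > 0$; by $C^\infty_{\mathrm{loc}}$ convergence there is $I_0$ such that for a fixed $\tau_0 \in (-\alpha_{I_0}^2 t_{I_0}, 0)$ we have $\hat v(l)\hat y(l) - \epsilon \le \tilde v_i(l, \tau_0)\tilde y_i(l, \tau_0)$ for all $i > I_0$. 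Unravelling the rescaling \eqref{HbddEq_1} and applying the gradient estimate $yv \le c$ of Lemma \ref{Lemma_UV} to $M_t$ at the unscaled point gives
\[
\frac{c}{2\alpha_i}\bigl(\cosh(2c^{-1}\hat x_1) + 1\bigr) - \frac{\epsilon}{\alpha_i} \le c \quad \text{for all } i > I_0 \, .
\]
For fixed $i$ the point $l$ (equivalently $\hat x_1$) is free, so the left-hand side is unbounded, a contradiction. Hence no singularity can form in this regime; by the standard Arzel\`a--Ascoli / derivative-estimate machinery the flow extends past $T$, contradicting that $T$ is the first singular time.

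The main obstacle is the identification $\tilde H_\infty \equiv 0$ and, relatedly, verifying that the limit flow is genuinely \emph{static} rather than merely a (possibly moving) eternal solution. One must be careful that $\tilde H_i / |\tilde A_i| \to 0$ holds locally uniformly and not just at the basepoint: this uses that the singular point is isolated (by \cite{AAG, MA2}) so the hypothesis $|A|^2/H^2 \to \infty$ can be arranged uniformly on the relevant space-time neighbourhood $N_\epsilon$, and that along the maximum-curvature rescaling $|\tilde A_i|$ stays bounded below away from zero on compact sets near the origin by the derivative estimates. A secondary point, as in Theorem \ref{Thm:BddMC}, is ruling out that the limit is a plane or a shrinking/expanding configuration degenerate at the axis; here axial symmetry together with the lower height bound transferred through the rescaling (via $|\tilde A_i| \le c\, \tilde y_i^{-1}$) pins down the catenoid, after which the Lemma \ref{Lemma_UV} argument closes the contradiction exactly as before.
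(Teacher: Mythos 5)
Your proposal follows essentially the same route as the paper: rescale about the maximum-curvature points on the axis, observe that $|A|^2/H^2\to\infty$ forces the rescaled mean curvature to vanish in the limit so that $\tilde M_{\infty,\tau}$ is a stationary (minimal) axially symmetric flow, identify it with the catenoid, and then run the identical Lemma \ref{Lemma_UV} contradiction from Theorem \ref{Thm:BddMC}. The extra care you take (local uniformity of $\tilde H_i/|\tilde A_i|\to 0$, ruling out the plane) is a reasonable elaboration of details the paper leaves implicit, but the argument is the same.
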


\begin{proof}
We proceed as in the proof of Theorem \ref{Thm:BddMC}: suppose in order to obtain a contradiction that a singularity forms at the point $x_*$ on the axis of rotation and at time $t=T$. We again rescale the flow. If $|A|^2/H^2 \to \infty$ then $\alpha_i^{-1}H \to 0$, once again giving us a stationary limit flow, which must be the catenoid. The rest of the proof goes through unchanged.
\end{proof}

\section{Type I singularities}\label{sec:boundedAonH}

\noindent
We prove that all singularities must be of \emph{type I}:

\begin{proposition}\label{Prop_TypeI} {\emph{(Type I Singularities)}}
Consider a smooth, axially symmetric solution $M_t$ of mean curvature flow \eqref{eq:int_2} in $\mathbb{R}^3$ with Neumann boundary data on the maximal time interval $[0,T)$ for some $T>0$. Suppose that a singularity forms on the axis of rotation at $x_*\in\mathbb{R}^3$ at time $T$. Assume in addition that there exists a neighbourhood $N_\epsilon\subset \mathbb{R}^3\times \mathbb{R}^+$ centred at $(x_*,T)$ and a constant $c_0>0$ such that $|A|^2/H^2\leq c_0$ in $N_\epsilon$. Then there exists a constant $C>0$ such that 
$$ \max_{M_t\cap N_\epsilon} |A|^2 \leq C \frac{1}{T-t}  $$
for all $t<T$.
\end{proposition}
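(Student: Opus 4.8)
The plan is to establish the type~I bound $\max_{M_t\cap N_\epsilon}|A|^2\le C/(T-t)$ by reducing everything to a bound on $H$, and then using the hypothesis $|A|^2\le c_0 H^2$ to transfer that bound to $|A|^2$. Since the quantity $H^2(T-t)$ is scale-invariant in the right way, I expect the argument to proceed via the evolution equation for $H$ together with a suitably chosen auxiliary function. First I would record that in $N_\epsilon$ the pinching $|A|^2\le c_0 H^2$ forces $H>0$ on the high-curvature region (as $|A|^2\to\infty$ and $H^2\ge |A|^2/c_0$), so we are working where $H>0$ and may divide by $H$. Then from $\frac{\partial}{\partial t}H=\Delta H+H|A|^2$ (Lemma~\ref{Lemma_Evolution_Equations_2}(vi)) and the pinching one gets $\frac{\partial}{\partial t}H\le \Delta H+c_0 H^3$, and the ODE comparison $\frac{d}{dt}\bar H=c_0\bar H^3$ has solutions blowing up like $(T-t)^{-1/2}$; the content is to upgrade this heuristic into a genuine upper bound $H\le C(T-t)^{-1/2}$ valid in the interior of $N_\epsilon$.

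The way I would carry this out is via the rescaling machinery of Section~\ref{sec:rescaling}: suppose for contradiction that $\sup_{M_t\cap N_\epsilon}H^2(T-t)\to\infty$. Choosing $t_i\to T$, $l_i$ realising (a space-time version of) the maximum of $H^2(T-t)$, and rescaling by $\alpha_i=|A|(l_i,t_i)$ as in \eqref{HbddEq_1}, I obtain a limit flow $\tilde M_{\infty,\tau}$ on $(-\infty,\beta)$ with $|\tilde A_\infty|^2\le 1$, $|\tilde A_\infty|(l_\infty,0)=1$, and by the contradiction hypothesis $\tilde H_\infty\not\equiv 0$ — indeed $\tilde H_\infty^2$ is bounded below away from zero at the base point. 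Because $|A|^2\le c_0 H^2$ persists under rescaling, the limit is a smooth, axially symmetric ancient solution with $|\tilde A_\infty|^2\le c_0\tilde H_\infty^2$ and $\tilde H_\infty>0$ somewhere; combined with the curvature estimates $|\tilde A_\infty|\le c\,\tilde y_\infty^{-1}$ and $\tilde k/\tilde p$ bounded (Propositions~\ref{Prop_KonP},~\ref{Prop_KonP2}), and the convexity-type consequences of $H>0$, this should force the limit to be a shrinking cylinder (or sphere), exactly as in Section~5 of \cite{GH2}. For the shrinking cylinder one has $\tilde H_\infty^2\cdot(\beta-\tau)$ bounded, which contradicts the assumption that this quantity was forced to be unbounded along the sequence. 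Hence $\sup H^2(T-t)\le C$ on a slightly smaller neighbourhood, and the pinching hypothesis gives $|A|^2\le c_0 H^2\le c_0 C/(T-t)$.

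An alternative, more self-contained route — which is probably what the paper intends given the Appendix remark about maximum principles on non-cylindrical domains — is to argue directly with the function $f=|A|^2/H^2$ or with $g=|A|^2-\varepsilon H^2$ on $N_\epsilon$, reproducing Huisken's type~I argument from \cite{GH2} but now only on the region $N_\epsilon$ rather than the whole (closed) manifold. One would compute $(\partial_t-\Delta)(|A|^2/H^2)$, which in the surface case $|A|^2=k^2+p^2$, $H=k+p$ reduces to a one-variable computation in $k/p$, and show using Proposition~\ref{Prop_KonP2} and the gradient estimate (Lemma~\ref{Lemma_UV}) that the ratio stays bounded and that the reaction term in the evolution of $H$ is controlled; the Neumann boundary behaviour enters only through the orthogonality condition at $x_1=a,b$, which lies outside $N_\epsilon$ for a singularity on the axis in the interior, so in fact the boundary plays no role here and one is comparing against a parabolic subdomain with a purely spatial lateral boundary, handled by the non-cylindrical maximum principle in the Appendix.

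The main obstacle, in either approach, is the classification of the rescaled limit flow: ruling out non-compact non-cylindrical ancient limits. The curvature bounds we have control $|A|$ against $y^{-1}$ and $k$ against $p$, but turning these plus $H>0$ and the axial symmetry into the statement ``the limit is a round shrinking cylinder'' requires either invoking Huisken's rigidity/classification in \cite{GH2} essentially verbatim (legitimate, since in the pinched regime $H>0$ and the hypotheses of Section~5 of \cite{GH2} are met on the limit) or redoing that classification by hand. I would lean on \cite{GH2} for this step and spend the detailed write-up instead on verifying that the rescaled solutions genuinely satisfy the hypotheses of that classification — in particular that the pinching $|A|^2\le c_0H^2$ and the lower bound on $\tilde H_\infty$ at the base point survive the limit — and on the bookkeeping that converts ``$H^2(T-t)$ bounded on $M_t\cap N_\epsilon$'' back into the stated estimate after possibly shrinking $\epsilon$.
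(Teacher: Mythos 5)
There is a genuine gap: neither of your two routes actually produces the $1/(T-t)$ rate by a mechanism that works, and neither matches the paper's argument. The paper's proof is a maximum-principle argument on the specific quantity $q/H=\langle\nu,\mathbf{i}_1\rangle y^{-1}H^{-1}$: its evolution equation (from Lemma \ref{Lemma_Evolution_Equations_2}) has a gradient term and a zero-order term that is controlled, so the non-cylindrical maximum principle (Proposition \ref{Prop_Max_Principles}) bounds $|q|/H$ in $N_\epsilon$ by its values on $\Gamma_{N_\epsilon}$; these boundary values are bounded because $H\geq c$ on $\Gamma_{N_\epsilon}$ (via the pinching hypothesis) and $y\geq c$ there (the singular point is avoided). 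Combined with Proposition \ref{Prop_KonP} this gives $|q|\leq cp$, hence $y^{-2}=p^2+q^2\leq Cp^2$, and the remainder of Huisken's Proposition 5.3 in \cite{GH2} (which takes exactly the bounds $k\le cp$, $|q|\le cp$ as input) then yields $|A|^2\le C/(T-t)$. Your second route is in the right spirit — reproduce Huisken's Section 5 argument on $N_\epsilon$ using the non-cylindrical maximum principle — but you run it on the wrong quantities: $|A|^2/H^2$ is \emph{assumed} bounded in the hypothesis, so proving it stays bounded accomplishes nothing, and $|A|^2-\varepsilon H^2$ belongs to Huisken's closed/convex analysis, not to the axially symmetric Section 5 machinery. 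Nowhere in this route do you identify where the factor $(T-t)^{-1}$ comes from; "the reaction term in the evolution of $H$ is controlled" is not a substitute for the chain $q/H$ bounded $\Rightarrow$ $|A|^2\le Cy^{-2}$ $\Rightarrow$ type I.

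Your first (primary) route has a more serious structural problem. If $\sup H^2(T-t)\to\infty$, a blow-up along such points produces (after the usual point-selection) an \emph{eternal} limit flow, not a shrinking cylinder — the shrinking cylinder is precisely the type I model and cannot arise as a type II limit — so the contradiction you sketch ("the limit is a shrinking cylinder, for which $\tilde H_\infty^2(\beta-\tau)$ is bounded") is not coherent as stated. Moreover, the classification step you defer to \cite{GH2} is not available there: Section 5 of \cite{GH2} proves a type I estimate for a globally defined axially symmetric flow with $H>0$ between parallel planes; it is not a classification of ancient or eternal axially symmetric limit flows, and invoking it for the blow-up of the very flow whose type I behaviour you are trying to establish is circular. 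There are also technical mismatches in the selection (you maximise $H^2(T-t)$ at $(l_i,t_i)$ but rescale by $\alpha_i=|A|(l_i,t_i)$, which does not give the uniform bound $|\tilde A_i|\le 1$ you then use). The fix is simply to abandon the blow-up contradiction here and carry out the $q/H$ maximum-principle argument on $N_\epsilon$ as above.
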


\noindent
We proceed as in section 5 of \cite{GH2}. We emphasize that our setting will generate additional boundary terms.

\begin{proof}
\noindent 
From Lemma \ref{Lemma_Evolution_Equations_2} we have
$$ \frac{\partial}{\partial t} \left(\frac{q}{H} \right) = \Delta \left(\frac{q}{H} \right) +  \frac{2}{H} \nabla_i H \nabla_i \left( \frac{q}{H} \right) + \frac{q}{H} \left(p^2 - q^2 - 2kp \right) . $$
Following \cite{GH2} we have bounds on the final term and deal with our different boundary terms by applying the non-cylindrical maximum principle, Proposition \ref{Prop_Max_Principles}, to obtain
$$ \frac{|q|}{H} \leq \max \frac{|q|}{H}\bigg\vert_{\Gamma_{N_\epsilon}} \, . $$

\noindent
Now note that 
$\vert q \vert = \left\vert \langle \nu, \mathbf{i}_1 \rangle y^{-1} \right\vert \leq y^{-1}$. By assumption, $|A|^2/H^2$ is bounded in $N_\epsilon$ so we can find a constant $c>0$ such that $H\vert_{\Gamma_{N_\epsilon}} \geq c$. In addition, it is well-known (see for example Lemma 5.2 in \cite{AAG}) that $y$ is bounded from below away from the singular point. In particular therefore we have a constant $c>0$ such that $y\vert_{\Gamma_{N_\epsilon}} \geq c $, giving us a bound on $|q|/H$.
Applying Proposition \ref{Prop_KonP} we find
$$ |q| \leq cH \leq c\left( p + k \right) \leq cp $$
in $N_\epsilon$. The rest of the proof of Proposition 5.3 in \cite{GH2} goes through unchanged in $N_\epsilon$.
\end{proof}

\noindent
Together with Theorem 6.1 and Theorem 6.2, this gives Theorem 1.1.

\begin{remark}
As a direct application of our main result we note that any axially symmetric surface with Neumann boundary cannot have $H<0$ everywhere. Indeed, it follows from section~\ref{Section_Negative_MC} that no singularity can develop under mean curvature flow in the negative mean curvature setting. However, an enclosing cylinder of radius $y_{max}+1$ must collapse onto a line under mean curvature flow at time $T=(y_{max}+1)^2/2$. This yields a contradiction with the well-known barrier principle for mean curvature flow.
\end{remark}

\section*{Appendix: non-cylindrical maximum principle}
\label{Section_Max_Princ}
\noindent
In this section we state the maximum principle for non-cylindrical domains which was required for the proof of Theorem 1.1. In particular this extends work of Ecker \cite{KEB1} and Lumer \cite{GL872} to our setting. Note that in \cite{GL872} these are discussed in an operator theoretic setting. \\

\noindent
Let $\Lambda = M^n$. Let $V \subset \Lambda \times (0, T)$ be an open non-cylindrical domain. Let $\Lambda_t = \Lambda \times \{ t \} \, ,$ and for $t \neq 0 $ let  $V_t = \Lambda_t \cap V $, the cross sections of $V$ for constant $t$. Let $\overline{V}$ denote the closure of $V$ and $V_0 = \Lambda_0 \cap \overline{V} \, .$  The boundary of $V$ \, is $\partial V = \overline{V} \backslash V \, .$ The parabolic boundary is $\Gamma_V = \partial V \backslash \Lambda_T  \, .$ To describe the horizontal parts of the boundary of $V$ in the space-time diagram, we define the following: let $Z_t$ be the largest subset of $\Lambda_t \cap \partial V $ that is open in $\partial V$ and can be reached from ``below'' (with $t$ the vertical axis) in $V$ . Let $Z_V = \bigcup_{0 < t < T } Z_t $ and $\delta_V =  \Gamma_V \backslash Z_V  \, .$

\begin{proposition}\label{Prop_Max_Principles}{\emph{(Non-Cylindrical Maximum Principle)}}
Let $\left(M_t \right)_{t \in (0, T)}$ be a solution of the mean curvature flow \eqref{eq:int_2} consisting of hypersurfaces $M_t=\mathbf{x}_t(\Lambda)$, where $ \mathbf{x}_t = \mathbf{x}( \cdot, t) : \Lambda \times [0, T) \rightarrow \bigR^{n+1}$ and $\Lambda$ is compact. Suppose $f \in  C^{2,1}(V) \cap C(\overline{V}) $  satisfies an inequality of the form
$$ \left( \frac{d}{d t} - \Delta \right) f \leq \langle a , \nabla f \rangle \, ,
$$ 
where the Laplacian $\Delta$ and the gradient $\nabla$ are computed on the manifold $M_t$. For the vector field $a: V \rightarrow \bigR^{n+1}$ we only require that it is continuous in a neighbourhood of all maximum points of $f$ . Then
$$ \sup_V f \leq  \sup_{\Gamma_V} f \, ,
$$
for all $t \in [0, T)$.  \\
Assuming $f$ to have a positive supremum in $V$ then
$$ \sup_V f \leq \sup_{\delta_V} f \, ,
$$
for all $t \in [0, T) \, .$
\end{proposition}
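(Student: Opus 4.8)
\textbf{Proof proposal for the Non-Cylindrical Maximum Principle (Proposition~\ref{Prop_Max_Principles}).}

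The plan is to adapt the classical parabolic weak maximum principle to the moving-domain setting, where the essential new feature is that the parabolic boundary $\Gamma_V$ of a non-cylindrical domain need not be swept out monotonically in time, so the standard ``first time the maximum is attained in the interior'' argument must be handled with care. First I would reduce to the strict case by the usual perturbation trick: for $\eps>0$ set $f_\eps = f - \eps t$, which satisfies $\left(\tfrac{d}{dt}-\Delta\right) f_\eps \leq \langle a,\nabla f_\eps\rangle - \eps < \langle a, \nabla f_\eps \rangle$, and it suffices to prove $\sup_V f_\eps \leq \sup_{\Gamma_V} f_\eps$ and then let $\eps\to 0$ using continuity of $f$ on the compact set $\overline V$ (compactness of each $\Lambda_t=\mathbf{x}_t(\Lambda)$ and of $[0,T-\delta]$ for the relevant slices guarantees the suprema are attained). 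So assume from now on the strict inequality.

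Next I would argue by contradiction: suppose the supremum of $f_\eps$ over $\overline V$ (restricted to $t\le T-\delta$ for a fixed small $\delta$, to keep everything on compact sets) exceeds its supremum over $\Gamma_V$, and let $(p_0,t_0)$ be a point where this larger value is attained with $t_0$ \emph{minimal}. Since the value exceeds the boundary maximum, $(p_0,t_0)\notin\Gamma_V$, hence $t_0>0$ and $(p_0,t_0)$ lies in the open set $V$ or on the ``horizontal upper cap'' $Z_V$; in either case, by minimality of $t_0$, on the slice $V_{t_0}$ the function $x\mapsto f_\eps(x,t_0)$ attains an interior maximum at $p_0$ on the manifold $M_{t_0}$, so $\nabla f_\eps(p_0,t_0)=0$ and $\Delta f_\eps(p_0,t_0)\le 0$. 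Here is where continuity of $a$ near maximum points of $f$ is used: near $(p_0,t_0)$ the term $\langle a,\nabla f_\eps\rangle$ is controlled, and in fact vanishes at $(p_0,t_0)$ since $\nabla f_\eps=0$ there. For the time derivative, minimality of $t_0$ forces $\tfrac{d}{dt}f_\eps(p_0,t)\big|_{t=t_0^-}\ge 0$ along the flow (the value at $(p_0,t_0)$ is not exceeded at any earlier time reachable from below in $V$). Combining, $\left(\tfrac{d}{dt}-\Delta\right) f_\eps(p_0,t_0) \ge 0 = \langle a,\nabla f_\eps\rangle(p_0,t_0)$, contradicting the strict inequality. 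This proves the first assertion; the second then follows because when $\sup_V f>0$ one may replace $\Gamma_V$ by $\delta_V$: the excluded part $Z_V$ consists of upper horizontal caps which, by the reachable-from-below structure, cannot carry a positive interior-type maximum that is not already seen on $\delta_V$ — one reruns the contradiction argument observing that a maximizing point on $Z_t$ would again have $\nabla f=0$, $\Delta f\le 0$ there, and the time-derivative sign forces the value to have been attained earlier, pushing the minimal $t_0$ onto $\delta_V$.

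The main obstacle I expect is making rigorous the statement that ``at the minimal time $t_0$ the point $p_0$ gives an interior spatial maximum with $\Delta f_\eps \le 0$ and the correct sign of the time derivative'' in the non-cylindrical setting: the subtlety is that the spatial domains $V_{t_0}$ are open subsets of the moving manifolds $M_{t_0}$ that can shrink or change topology, so one must verify that $p_0$ genuinely lies in the \emph{relative interior} of $V_{t_0}$ rather than on its (moving) edge — this is exactly what the definitions of $Z_t$, $Z_V$ and ``reachable from below'' are designed to encode, and the careful bookkeeping is that any point of $\Gamma_V$ not in $Z_V$ belongs to $\delta_V$, while points of $Z_V$ are approached from strictly earlier times. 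Secondarily, one must ensure all suprema are attained and all the pointwise differential inequalities make sense at $(p_0,t_0)$, which is why we work on time slices $t\le T-\delta$ with $\Lambda$ compact and only afterwards let $\delta\to 0$; since the stated conclusion is ``for all $t\in[0,T)$'' this exhaustion is harmless. Beyond these structural points the argument is the standard Hopf-type computation and I would not belabor it; the reader is referred to \cite{KEB1} and \cite{GL872} for the operator-theoretic formulation and to \cite{MASK2} for the details in a closely related moving-boundary context.
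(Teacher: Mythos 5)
The paper itself gives no proof of Proposition \ref{Prop_Max_Principles}: it is stated in the appendix and the argument is deferred to Ecker \cite{KEB1}, Lumer \cite{GL872} and \cite{MASK2}, so your proposal can only be judged on its own merits. For the first inequality, $\sup_V f\leq \sup_{\Gamma_V} f$, your sketch is the standard perturbation ($f_\epsilon=f-\epsilon t$) plus first-interior-maximum argument and is essentially sound. One bookkeeping slip: by the definitions in the appendix, $Z_t\subset \Lambda_t\cap\partial V$ and $\Gamma_V=\partial V\setminus\Lambda_T$, so $Z_V\subset\Gamma_V$; a point whose value exceeds $\sup_{\Gamma_V}f$ therefore cannot lie on $Z_V$, and your alternative ``in $V$ or on the horizontal cap $Z_V$'' is vacuous in that case. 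This is harmless for the first claim, but it is precisely the confusion that undermines the second.

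The genuine gap is in the second inequality, $\sup_V f\leq\sup_{\delta_V}f$, which is the whole point of the non-cylindrical statement. At a point of $Z_V$ the function is only known to be continuous ($f\in C^{2,1}(V)\cap C(\overline V)$): its spatial and time derivatives need not exist there, and in general the slice $V_{t_0}$ contains no relative neighbourhood of such a point at all, since the domain may close off exactly at time $t_0$ --- that is the situation $Z_V$ is designed to capture. So the assertion that a maximizing point on $Z_t$ ``would again have $\nabla f=0$, $\Delta f\le 0$'' is not justified, and the further step ``the time-derivative sign forces the value to have been attained earlier, pushing the minimal $t_0$ onto $\delta_V$'' is not a valid inference: a nonnegative time derivative at a maximum does not force earlier attainment, and even if the supremum were attained at an earlier time it could be attained at an interior point of $V$, which contradicts nothing in the weak maximum principle already proved. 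The expected mechanism is different: because $Z_t$ is open in $\partial V$ and reachable from below, one can fit small cylinders $B\times(t_0-\eta,t_0)\subset V$ whose top cap lies on $Z_t$, apply the ordinary cylindrical maximum principle on these (their top is not part of their parabolic boundary), and then run an exhaustion or first-time argument to transfer the supremum to $\delta_V$; this is the content of Lumer's argument and of the appendix of \cite{MASK2}. Finally, your sketch nowhere uses the hypothesis that $f$ has a positive supremum in $V$, which appears explicitly in the second assertion; a complete proof must show where that hypothesis enters, and its absence from your argument is another sign that the second claim has not actually been established.
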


\bibliographystyle{acm}
\bibliography{citations}

\vspace{.6cm}

\noindent
School of Mathematics, \\
Monash University, Australia  \\
john.head@monash.edu \\

\noindent
Department of Econometrics and Business Statistics,   \\
Monash University, Australia \\
sevvandi.kandanaarachchi@monash.edu

\end{document}